\documentclass{article}
\usepackage{graphicx} 
\usepackage{amsmath,amssymb,amsthm, epsfig}

\usepackage{amsfonts}

\usepackage{hyperref}

\usepackage{amsmath}

\usepackage[pdftex]{color}

\usepackage{color}

\usepackage{ulem}

\usepackage{mathrsfs}

\usepackage{wasysym}

\usepackage{stackrel}

\newtheorem{theorem}{Theorem}[section]

\newtheorem{lemma}[theorem]{Lemma}

\newtheorem{corollary}[theorem]{Corollary}

\theoremstyle{definition}

\newtheorem{definition}[theorem]{Definition}

\theoremstyle{remark}

\newtheorem{remark}[theorem]{Remark}

\numberwithin{equation}{section}

\newcommand{\intav}[1]{\mathchoice {\mathop{\vrule width 6pt height 3 pt depth  -2.5pt
\kern -8pt \intop}\nolimits_{\kern -6pt#1}} {\mathop{\vrule width
5pt height 3  pt depth -2.6pt \kern -6pt \intop}\nolimits_{#1}}
{\mathop{\vrule width 5pt height 3 pt depth -2.6pt \kern -6pt
\intop}\nolimits_{#1}} {\mathop{\vrule width 5pt height 3 pt depth
-2.6pt \kern -6pt \intop}\nolimits_{#1}}}

\newlength{\hchng}
\newlength{\vchng}
\title{Optimal $C^{1,\alpha}$ regularity up to the boundary for fully nonlinear elliptic equations with double phase degeneracy}
\author{Junior da Silva Bessa and Jehan Oh}
\date{\today}

\begin{document}

\maketitle

\begin{abstract}
\noindent In this paper we establish optimal $C^{1,\alpha}$ regularity up to the boundary for viscosity solutions of fully nonlinear elliptic equations with double phase degeneracy law and oblique boundary conditions. The approach developed here relies on first deriving uniform boundary Hölder estimates for perturbed models with oblique boundary data in ``almost $C^{1}$-flat" domains. Building upon these estimates, the desired regularity is obtained through a compactness and stability framework for viscosity solutions. As a byproduct of our analysis, we determine the optimal Hölder exponent for solutions when the governing operator is quasiconvex or quasiconcave. In addition, we establish an improved regularity result along vanishing points of the source term.

\medskip
\noindent
\textbf{Keywords:} Fully nonlinear elliptic equations, oblique derivative problem, double phase degeneracy, improved regularity.

\medskip

\noindent \textbf{Mathematics Subject Classification (2020):} 35B65, 35D40, 35J25, 35J70.
\end{abstract}

\section{Introduction}


The regularity theory for fully nonlinear elliptic equations has experienced remarkable progress over the past two decades, driven by the development of viscosity solution methods and geometric tangential analysis.
A central theme in this program is to understand how the interplay between the nonlinear structure of the operator and the degeneracy or singularity of lower-order terms affects the optimal smoothness of solutions.
We investigate this question for a class of equations with double phase degeneracy and oblique boundary conditions, and we establish sharp $C^{1,\nu}$ regularity estimates up to the boundary.

More precisely, we consider the boundary value problem
\begin{equation}\label{1}
\left\{
\begin{array}{rcll}
 \left( |Du|^p +\mathfrak{a}(x)|Du|^q \right)F(D^2u) &=& f(x)& \mbox{in} \ \, \Omega, \\
 \beta(x) \cdot Du + \gamma(x)u&=& g(x) &\mbox{on} \ \, \partial \Omega,
\end{array}
\right.
\end{equation}
where $0 < p \leq q$, $\mathfrak{a}(\cdot) \geq 0$ is a continuous modulating coefficient, $F$ is a $(\lambda,\Lambda)$-uniformly elliptic operator, and $\beta$ is an oblique vector field satisfying $\beta\cdot\vec{\mathbf{n}}>0$ on $\partial\Omega$ ($\vec{\mathbf{n}}$ denoting the inward unit normal).
The diffusion term $\mathcal{H}(x,\vec{\xi})=|\vec{\xi}|^{p}+\mathfrak{a}(x)\,|\vec{\xi}|^{q}$ endows the equation with a non-homogeneous degeneracy: in the region $\{\mathfrak{a}(x)=0\}$ the equation behaves like the single-phase model $|Du|^p F(D^2 u)=f$, while in $\{\mathfrak{a}(x)>0\}$ it exhibits a genuinely two-phase character governed by the growth rates $p$ and $q$ simultaneously.
This structure originates from the double phase problem introduced by Zhikov \cite{Zh1,Zh2} in the context of homogenization and the modeling of strongly anisotropic composite materials, where the coefficient $\mathfrak{a}(x)$ dictates the local material phase.

The systematic regularity theory for double phase functionals originates in the works of Colombo and Mingione \cite{CM1,CM2}, with subsequent developments including Harnack inequalities \cite{BCM}, Calder\'{o}n--Zygmund estimates \cite{CM3} and Schauder theory \cite{DFM1,DFM2}. We also refer to the survey of Mingione and R\u{a}dulescu \cite{MR} for an overview.

In a parallel line of research, the degenerate equation $|Du|^p F(D^2 u)=f$ has been studied extensively, beginning with Birindelli and Demengel \cite{BD1,BD2,BD3}, who established comparison principles, eigenvalue theory, and $C^{1,\alpha}$ regularity at the gradient level. The breakthrough interior result of Imbert and Silvestre \cite{IS} gave $C^{1,\alpha}_{\mathrm{loc}}$ regularity for $p>0$, together with
an explicit counterexample showing that the exponent $1/(1+p)$ is optimal. When $F$ is convex or concave, the Evans--Krylov theorem yields $\alpha_0=1$, and the optimal exponent $1/(1+p)$ was obtained by Ara\'{u}jo, Ricarte, and Teixeira \cite{ART} via geometric tangential methods \cite{AZ,Nasc25,T1,T2}.

The step connecting the double phase variational program with the fully nonlinear framework was taken by De Filippis \cite{DeFil21}, who treated $(|Du|^p+\mathfrak{a}(x)|Du|^q)\,F(D^2 u)=f$ in the interior and obtained $C^{1,\alpha}_{\mathrm{loc}}$ regularity with $\alpha$ depending only on $n,\lambda,\Lambda$, and $p$---not on $q$ or $\mathfrak{a}(\cdot)$---confirming that the lower exponent $p$ governs the regularity. Related sharp interior estimates were obtained by da Silva and Ricarte \cite{dSR}, and the variable-exponent case by Fang, R\u{a}dulescu, and Zhang \cite{FRZ}.

For boundary regularity, the oblique derivative problem $\beta(x)\cdot Du + \gamma(x)\,u = g(x)$ arises in the optimal control of reflected
diffusions \cite{LT} and in free-boundary problems. The viscosity theory for oblique data was initiated by Ishii \cite{Ish} and Barles \cite{Bar}, and the full regularity ladder ($C^{\alpha}$, $C^{1,\alpha}$, $C^{2,\alpha}$) for the non-degenerate problem was established by Milakis and Silvestre \cite{MS} in the Neumann case and by Li and Zhang \cite{LiZhang18} in general; see also Bessa, da Silva, and Ricarte \cite{BesdaSRic26}. The oblique condition is more delicate than the Dirichlet or Neumann ones: it involves the full boundary gradient, the tangential part of $\beta$ breaks the Neumann symmetries, and the Ishii--Lions method does not apply directly. In the degenerate regime, $C^{1,\alpha}$ boundary estimates were obtained by Banerjee and Verma \cite{BanVer22} (Neumann, $C^2$ domains) and Ricarte \cite{Ric20} (convex operators), and sharp estimates on $C^1$ domains by Byun, Kim, and Oh \cite{ByunKimOh25}; more recently, Bessa, Ricarte, and Silva \cite{BesRicSil26} reached the degenerate oblique setting under relaxed convexity.
 
Despite this extensive progress, the existing literature has addressed the aforementioned difficulties essentially in pairs: double phase structure combined with fully nonlinear operators in the interior \cite{dSR, DeFil20}, degenerate fully nonlinear equations with oblique boundary conditions \cite{BesRicSil26, ByunKimOh25}, and double phase problems in the variational framework \cite{CM1,CM2}.
The simultaneous treatment of the three features, namely double phase degeneracy, fully nonlinear structure, and oblique boundary conditions, has remained open. This is the gap we fill in the present work.

\subsection{Assumptions and main results}

We introduce the notation used to state the main results and throughout the manuscript:

\begin{itemize}

\item For any \( x = (x_1, \ldots, x_n) \in \mathbb{R}^n \), we write \( x = (x', x_n) \), where \( x' = (x_1, \ldots, x_{n-1}) \in \mathbb{R}^{n-1} \).

\item We denote by \( \mathrm{B}_r(x) \) the open ball of radius \( r > 0 \) centered at \( x \in \mathbb{R}^n \). When \( x = 0 \), we simply write \( \mathrm{B}_r \).

\item The upper half-ball of radius \( r \) is defined as \( \mathrm{B}^+_r = \mathrm{B}_r \cap \mathbb{R}^n_+ \).

\item \( \mathrm{T}_r := \{ (x', 0) \in \mathbb{R}^{n-1} : |x'| < r \} \) denotes the flat boundary of \( \mathrm{B}^+_r \). This extends to \( \mathrm{T}_r(x'_0) := \mathrm{T}_r + x'_0 \), where \( x'_0 \in \mathbb{R}^{n-1} \).

\item For \( r > 0 \), \( x_0 \in \mathbb{R}^n \), and a set \( \Omega \subset \mathbb{R}^n \), we define \( \Omega_r(x_0) = \Omega \cap \mathrm{B}_r(x_0) \) and \( \partial \Omega_r(x_0) = \partial \Omega \cap \mathrm{B}_r(x_0) \). When $x_{0}=0$, we simply denote by $\Omega_{r}$ and $\partial\Omega_{r}$.
\item For the gradient of a function we adopt the notation $Du = (D'u, D_n u)$, where $D'u = (D_1 u, \ldots, D_{n-1} u)$, with $D_i u$, $i = 1, \ldots, n$, being the $i$-th partial derivative of $u$.

\item For \( k \geq 0 \), a function \( \varphi \) is \( C^{k,\alpha} \) at \( x_0 \) if there exists a polynomial \( P \) of degree at most \( k \) such that \( |\varphi(x) - P(x)| \leq K |x - x_0|^{k + \alpha} \) for all \( x \) near \( x_0 \). In this case, we set \( D^\zeta \varphi(x_0) := D^\zeta P(x_0) \), where \( \zeta \) is a multi-index with \( |\zeta| \leq k \). Moreover,
\[
[\varphi]_{C^{k,\alpha}(x_0)} := K 
\quad \text{and} \quad 
\| \varphi \|_{C^{k,\alpha}(x_0)} := K + \sum_{|\zeta| \leq k} |D^\zeta \varphi(x_0)|.
\]
\end{itemize}

Throughout this paper, we make the following assumptions on the data of problem \eqref{1}:
\begin{enumerate}
\item[\textbf{(A1)}]\label{HypA1} \textbf{(Ellipticity)} We assume that $F: \mathrm{Sym}(n) \to \mathbb{R}$ is a $(\lambda,\Lambda)$-elliptic operator, i.e., there exist constants $0 < \lambda \le \Lambda$ such that
\[
\mathcal{P}_{\lambda,\Lambda}^-(\mathrm{M} - \mathrm{N}) \le F(\mathrm{M}) - F(\mathrm{N}) \le \mathcal{P}_{\lambda,\Lambda}^+(\mathrm{M} - \mathrm{N}),
\]
for any $\mathrm{M}, \mathrm{N} \in \mathrm{Sym}(n)$, where $\mathcal{P}_{\lambda,\Lambda}^\pm(\cdot)$ are the \textit{Pucci extremal operators} defined by  $$
\mathcal{P}_{\lambda,\Lambda}^{+}(\mathrm{M}) := \Lambda \sum_{e_i > 0} e_i + \lambda \sum_{e_i < 0} e_i\quad \text{and}\quad \mathcal{P}_{\lambda,\Lambda}^{-}(\mathrm{M}) := \lambda \sum_{e_i > 0} e_i + \Lambda \sum_{e_i < 0} e_i,$$
with $e_i$ denoting the eigenvalues of $\mathrm{M} \in \mathrm{Sym}(n)$. Moreover, for normalization purposes, we always assume $F(0) = 0$.

\item[\textbf{(A2)}] \textbf{(Diffusion term)} The two-phase diffusion term $\mathcal{H}: \Omega \times \mathbb{R}^n \to \mathbb{R}$ defined by
\[
\mathcal{H}(x, \vec{\xi}) = |\vec{\xi}|^p + \mathfrak{a}(x) |\vec{\xi}|^q
\]
satisfies:
\begin{itemize}
\item[(H1)] $0 < p \le q$;
\item[(H2)] The modulating term $\mathfrak{a}$ is continuous and nonnegative in $\Omega$, i.e.,
$$
\mathfrak{a} \in C^{0}(\Omega), \qquad \mathfrak{a}(x)\ge 0 \quad \text{in }\Omega.
$$
\end{itemize}

\item[\textbf{(A3)}] \textbf{(Regularity of the data)} The data satisfy $f \in C^0(\overline{\Omega})$, $\gamma, g \in C^{0,\alpha}(\partial \Omega)$, and $\beta \in C^{0,\alpha}(\partial \Omega; \mathbb{R}^n)$ for some $\alpha \in (0, \alpha_0]$, where $\alpha_0 \in (0,1]$ is the optimal Hölder exponent from the $C^{1,\alpha}$-regularity theory for the homogeneous problem with constant oblique boundary condition. Specifically, every viscosity solution $\mathrm{h}$ to
\begin{equation}\label{limitprofile}
\left\{
\begin{array}{rcll}
F(D^2 \mathrm{h}) &=& 0 & \text{in } \mathrm{B}_1^+, \\
\beta \cdot D\mathrm{h} &=& 0 & \text{on } \mathrm{T}_1,
\end{array}
\right.
\end{equation}
where $\beta$ is a constant vector field satisfying the oblique condition, belongs to $C^{1,\alpha_0}_{\mathrm{loc}}(\mathrm{B}_1^+)$ and satisfies
\begin{equation*}
\| \mathrm{h} \|_{C^{1,\alpha_0}(\overline{\mathrm{B}_{\frac{1}{2}}^+})} \le \mathrm{C}_0 \| \mathrm{h} \|_{L^\infty(\mathrm{B}_1^+)},    
\end{equation*}
where $\mathrm{C}_0 > 0$ depends only on $n$, $\lambda$, $\Lambda$, and $\delta_0$.
\end{enumerate}

\begin{remark}\label{Remark1.1}
Throughout the paper, we denote by $\mathcal{E}(\lambda,\Lambda)$ the class of operators satisfying \textbf{(A1)}. The existence of $\alpha_0 \in (0,1)$ in \textbf{(A3)} follows from Li and Zhang \cite[Theorem 4.1]{LiZhang18}.
\end{remark}

\medskip

Our first result concerns the $C^{1,\nu}$ regularity, where $\nu$ is the optimal exponent depending on the regularity exponent $\alpha_{0}$ associated with problem \eqref{limitprofile}, the exponent $\alpha$ of the boundary data, and the intrinsic scaling exponent. More precisely, the exponent $\nu$ is given by
\[
\nu=\min\left\{\alpha_{0}^{-},\alpha,\frac{1}{1+p}\right\},
\]
where the notation above should be understood in the following sense
\begin{eqnarray*}
\left\{
\begin{array}{rlcl}
\mbox{If} & \min\left\{\alpha,\frac{1}{1+p}\right\}<\alpha_{0} &\mbox{then}& \ u\in C^{1,\min\left\{\alpha,\frac{1}{1+p}\right\}}    \\
\mbox{If} &  \min\left\{\alpha,\frac{1}{1+p}\right\}\geq \alpha_{0} &\mbox{then}& \ u\in C^{1,\varsigma} \ \mbox{for any} \ 0<\varsigma<\alpha_{0}.\\
	\end{array}
	\right.
\end{eqnarray*}
With this observation in mind, the main result of this work is the following.

\begin{theorem}[\bf Optimal $C^{1,\nu}$ Regularity]\label{Thm1}
Let $\Omega\subset \mathbb{R}^{n}$ be a $C^{1}$-bounded domain with $0\in \partial \Omega_{1}$. Suppose  $u$ is a viscosity solution to 
\begin{equation}\label{1.2}
\left\{
\begin{array}{rcll}
 \left( |Du|^{p} +\mathfrak{a}(x)|Du|^{q} \right)F(D^2u) &=& f(x)& \mbox{in} \ \, \Omega_{1}, \\
 \beta(x) \cdot Du+\gamma(x)u&=& g(x) &\mbox{on} \ \, \partial \Omega_{1},
\end{array}
\right.
\end{equation}
satisfying the structural conditions {\bf(A1)--(A3)}. Then, $u\in C^{1,\nu}(\overline{\Omega_{\frac{1}{2}}})$ with
\begin{equation*}
\nu=\min\left\{\alpha_{0}^{-},\alpha,\frac{1}{1+p}\right\}.
\end{equation*}
Precisely, if $\min\left\{\alpha,\frac{1}{1+p}\right\}<\alpha_{0}$, then $u\in C^{1,\nu}(\overline{\Omega_\frac{1}{2}})$ for $\nu=\min\left\{\alpha,\frac{1}{1+p}\right\}$ and the following estimate holds
\begin{equation}\label{1.4}
\|u\|_{C^{1,\nu}(\overline{\Omega_{\frac{1}{2}}})}\leq \mathrm{C}\left(\|u\|_{L^{\infty}(\overline{\Omega_{1}})}+\|f\|_{L^{\infty}(\overline{\Omega_{1}})}^{\frac{1}{1+p}}+\|g\|_{C^{0,\alpha}(\partial\Omega_{1})}\right),    
\end{equation}
where $\mathrm{C}>0$ depends only on $n$, $\lambda$, $\Lambda$, $p$, $\alpha_{0}$, $\alpha$, $\delta_{0}$, $[\beta]_{C^{0,\alpha}(\partial\Omega_{1})}$, $\|\gamma\|_{C^{0,\alpha}(\partial\Omega_{1})}$, and the $C^{1}$ modulus of $\partial \Omega_{1}$. In the complementary situation where $\min\left\{\alpha,\frac{1}{1+p}\right\}\geq \alpha_{0}$, it follows that, for any fixed $0<\varsigma<\alpha_{0}$, estimate \eqref{1.4} is valid with $\nu=\varsigma$, and the constant $\mathrm{C}$ now also depends on $\varsigma$.
\end{theorem}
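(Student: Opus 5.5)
We will follow the geometric tangential (compactness) strategy, organized in four steps.

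\textbf{Step 1: reductions.} First we straighten and normalize. Using the $C^1$ regularity of $\partial\Omega$ we will not flatten the boundary, since a $C^1$ change of variables destroys the structure of $F$. Instead we work directly in domains that are ``almost $C^1$-flat'', i.e.\ $\partial\Omega_1$ is a graph $x_n=\psi(x')$ with $\|D\psi\|_{L^\infty}\le\varepsilon$. This can be achieved after rotation and dilation $x\mapsto rx$, which only improves the $C^1$ modulus.

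Next we normalize the data. Set $\kappa=\|u\|_{L^\infty}+\|f\|_{L^\infty}^{1/(1+p)}+\|g\|_{C^{0,\alpha}}$ and pass to $v=u/\kappa$. Then $v$ solves an equation of the same form,
\[
\bigl(|Dv|^p+\mathfrak{a}\kappa^{q-p}|Dv|^q\bigr)F_\kappa(D^2v)=f/\kappa^{1+p},
\]
with $F_\kappa(M)=F(\kappa M)/\kappa$ still in $\mathcal{E}(\lambda,\Lambda)$ and the boundary data divided by $\kappa$. A further scaling $w(x)=v(\rho x)/\rho^{1+\nu}$ makes $\|f\|_\infty$, $[\beta]_{C^{0,\alpha}}$, $\|\gamma\|$ and $\|g\|$ as small as desired. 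In all of these rescalings the coefficient $\mathfrak{a}$ gets multiplied by nonnegative factors. We never need an upper bound on it, which is why $C$ does not depend on $q$ or $\mathfrak{a}$.

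\textbf{Step 2: approximation lemma.} The key step is an approximation lemma. Given $\delta>0$ there is $\varepsilon>0$ such that the following holds: if $w$, with $\|w\|_\infty\le1$, solves the perturbed problem
\[
\mathcal{H}(x,Dw+\vec{\xi})F(D^2w)=f,\qquad \beta\cdot Dw+\gamma w=g,
\]
for an arbitrary vector $\vec{\xi}\in\mathbb{R}^n$, in an $\varepsilon$-flat domain with $\|f\|_\infty+\|g\|+\|\gamma\|+\mathrm{osc}\,\beta\le\varepsilon$, then there is a solution $h$ of \eqref{limitprofile}, with $\beta$ constant, such that $|w-h|\le\delta$ in $\Omega_{1/2}$.

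We prove it by contradiction and compactness, in three parts.
\begin{enumerate}
\item[(a)] Uniform boundary H\"older estimates (the earlier uniform $C^{0,\tau}$ bounds for perturbed oblique models in almost flat domains, independent of $\vec{\xi}$ and $\mathfrak{a}$) give Arzel\`a--Ascoli compactness of a contradicting sequence $w_k$.
\item[(b)] Stability of viscosity solutions lets us pass to the limit. Where $|\vec{\xi}_k|$ stays bounded, we use the standard ``cutting'' argument: at a test paraboloid with $|D\varphi+\vec{\xi}|$ small, the equation degenerates, and one tests with $\varphi$ plus a small linear perturbation to still conclude $F(D^2\varphi)\le0$ (Imbert--Silvestre). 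Where $|\vec{\xi}_k|\to\infty$, we divide by $\mathcal{H}\ge|\vec{\xi}_k|^p/2$.
\item[(c)] The boundary condition passes to the limit under Ishii's relaxed oblique condition, the limit domain being flat.
\end{enumerate}
In each case the limit solves $F_\infty(D^2h)=0$ with $\beta_\infty\cdot Dh=0$ on $\mathrm{T}_{1/2}$, which contradicts the choice of the sequence.

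\textbf{Step 3: iteration.} By \textbf{(A3)}, $h$ is $C^{1,\alpha_0}$ with bound $\mathrm{C}_0$. Fix $\nu<\alpha_0$ with $\nu\le\min\{\alpha,1/(1+p)\}$ and choose $\rho$ so that $\mathrm{C}_0\rho^{1+\alpha_0}\le\rho^{1+\nu}/2$. Then choose $\delta=\rho^{1+\nu}/2$. This gives an affine function $\ell(x)=a+b\cdot x$, with $|a|+|b|\le \mathrm{C}_0$, such that
\[
\sup_{\Omega_\rho}|w-\ell|\le\rho^{1+\nu}.
\]

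\emph{Correction of the affine part.} $\ell$ does not satisfy the boundary condition exactly. We therefore correct it with an affine adjustment of size $O(\varepsilon)$ so that $\beta(0)\cdot b+\gamma(0)a=g(0)$. After this correction, $w_1(x)=(w-\ell)(\rho x)/\rho^{1+\nu}$ solves a problem of the same class. Its gradient shift is $\vec{\xi}_1=b\rho^{-\nu}$, and its new data are
\begin{enumerate}
\item[$\bullet$] $f_1=\rho^{1-\nu-\nu p}f(\rho x)$, which is small precisely because $\nu\le1/(1+p)$;
\item[$\bullet$] $g_1$ with $C^{0,\alpha}$ seminorm scaled by $\rho^{\alpha-\nu}\le1$, using $\nu\le\alpha$;
\item[$\bullet$] a domain that is still $\varepsilon$-flat.
\end{enumerate}
Iterating yields affine $\ell_k$ with $\sup_{\Omega_{\rho^k}}|u-\ell_k|\le C\rho^{k(1+\nu)}$ and $|\ell_{k+1}-\ell_k|$ geometric. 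This gives $C^{1,\nu}$ at boundary points.

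\emph{The $\gamma u$ term.} The shift $\gamma u$ produces lower-order terms whose size scales like $\rho^{k(1-\nu)}$, so they stay small.

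\emph{The endpoint case.} If $\min\{\alpha,1/(1+p)\}\ge\alpha_0$, we run the same argument with any $\varsigma<\alpha_0$. Strict inequality is needed to absorb $\mathrm{C}_0\rho^{1+\alpha_0}$, which is why $C$ then depends on $\varsigma$.

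\textbf{Step 4: globalization.} We combine the boundary pointwise estimate with the interior $C^{1,\min\{\alpha_0^-,1/(1+p)\}}$ estimates of De Filippis / da Silva--Ricarte. The combination uses the standard argument distinguishing $|x-y|$ versus $\mathrm{dist}(x,\partial\Omega)$, giving \eqref{1.4} on $\overline{\Omega_{1/2}}$.

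\textbf{Main obstacle.} I expect the hardest step to be the approximation lemma, specifically the uniform-in-$\vec{\xi}$ boundary H\"older estimate combined with the stability of the oblique condition under degeneracy. Near the boundary, the degenerate region $\{Dw+\vec{\xi}\approx0\}$ interacts with the tangential part of $\beta$, so the Ishii--Lions interior trick must be replaced by barrier arguments adapted to almost-flat domains. I would first try to prove the H\"older bound separately in two regimes: large $|\vec{\xi}|$, where the equation is uniformly elliptic with small right-hand side, and small $|\vec{\xi}|$, where it behaves like the Pucci inequalities in $\{|Dw|\ge1\}$.
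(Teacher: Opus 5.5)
Your architecture matches the paper's. Both use a boundary H\"older estimate uniform in $\vec{\xi}$ and an approximation lemma by contradiction. That lemma splits into bounded shifts (one-phase limit, then the cutting lemma) and unbounded shifts (divide by $\mathcal{H}_j\geq 2^{-p}|\vec{\xi}_j|^{p}$, using only $\mathfrak{a}_j\geq 0$). Both then prove improvement of flatness at rate $\rho^{1+\nu}$ and iterate, with the data rescaling by $\rho^{k(1-\nu(1+p))}$ and $\rho^{k(\alpha-\nu)}$.

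The genuine difference is how you handle $\gamma$. The paper first proves the case $\gamma=0$. After normalizing $u(0)=g(0)=0$, the identity $\beta_0\cdot D\mathfrak{h}(0)=0$ gives $g_k(0)=0$ at every step, so $\ell$ never needs correcting. The paper then writes $\beta\cdot Du=g-\gamma u$ and obtains $g-\gamma u\in C^{0,\min\{\theta,\alpha\}}$ from the H\"older estimate. One application of the $\gamma=0$ result makes $u$ Lipschitz, so $g-\gamma u\in C^{0,\alpha}$, and a second application gives the full exponent.

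You instead carry $\gamma_k(x)=\rho^{k}\gamma(\rho^{k}x)$ through a single pass, and this also works. In the lemma, $\gamma_k w_k$ is a small bounded term that can be absorbed into $g_k$. If you normalize $u(0)=g(0)=0$ as the paper does (move $\gamma u(0)$ into $g$, subtract a multiple of $x_n$) and take $w_k(0)$ as the constant of each affine map, your correction step becomes unnecessary. The only new contribution to $g_k$ is then $-\rho^{k(1-\nu)}\gamma(\rho^{k}x)\,\vec{a}_k\cdot x$, as you anticipated. Your route avoids the second pass; the paper's keeps the lemma and the iteration free of $\gamma$.

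Several details need repair.
\begin{enumerate}
\item In Step~1, rescaling to $v(\rho x)/\rho^{1+\nu}$ destroys $\|w\|_\infty\leq 1$. Use $u(rx)/K$ instead.
\item Your regime split for the H\"older bound fails for large $|\vec{\xi}|$. Without a Lipschitz bound, nothing keeps $|D\phi+\vec{\xi}|$ large at touching points. No split is needed: for every $\vec{\xi}$ one has $\mathcal{H}\geq 1$ wherever $|D\phi+\vec{\xi}|>1$. So $w$ satisfies the Pucci inequalities on that set, and \cite[Theorem 3.1]{ByunKimOh25} is uniform in $\vec{\xi}$.
\item In (c), the relaxed boundary condition is vacuous at touching points with $D\phi(0)=-\vec{\xi}_\infty$. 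The strict oblique condition must be recovered with a modified test function. Its gradient must avoid $-\vec{\xi}_\infty$, and it must be a strict supersolution of both the equation and the boundary condition; the paper uses $\Psi(x)=\phi(x',0)+(A-\eta)x_n-Nx_n^2$. Touching points in the layer between $\partial\Omega_j$ and $\{x_n=0\}$ also need care.
\item The final contradiction needs a solution for $F_j$ and $(\beta_j)_0$ that is close to $u_j$. The paper gets it from the mixed Dirichlet--oblique problem and its uniqueness \cite{LiZhang18}. Alternatively, let your lemma output any operator in $\mathcal{E}(\lambda,\Lambda)$ with a nearby constant vector, and absorb the mismatch in your affine correction.
\item Quoting interior estimates from the literature may introduce dependence on $q$ or $\mathfrak{a}$. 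Running your flatness improvement on interior balls avoids this.
\end{enumerate}
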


We note that the exponent $\nu$ encodes three independent, competing constraints. The first, $\alpha_0$, is the universal H\"{o}lder exponent from the Krylov-Safonov theory for the non-degenerate homogeneous problem $F(D^2h)=0$; for general $(\lambda,\Lambda)$-elliptic operators this exponent cannot be improved, as shown by the counterexamples of Nadirashvili and Vl\u{a}du\c{t}~\cite{NV07,NV08}.
The second, $1/(1+p)$, is the degeneracy barrier dictated by the
intrinsic scaling of the equation, and is shown to be sharp by the
counterexample of Imbert and Silvestre~\cite{IS}. Finally, the constant also depends on the exponent $\alpha$ of the boundary data regularity $\beta$, $\gamma$, and $g$. Notably, when $\alpha\geq \frac{1}{1+p}$ and $\alpha_{0}=1$, the optimal exponent depends only on~$p$ and not on~$q$ or $\mathfrak{a}(\cdot)$, confirming that the lower phase governs the regularity in the double phase regime. To understand the restriction on the admissible exponent, consider the intrinsic rescaling
\[
v(x)=\frac{u(rx)}{r^{1+\hat{\alpha}}},
\]
for $\hat{\alpha}>0$ and $u$ be a viscosity solution of \eqref{1.2}. Then $v$ satisfies in interior,
\begin{equation*}
 \left( |Dv|^{p} +\mathfrak{a}_{r}(x)|Dv|^{q} \right)F_{r}(D^2 v) = f_{r}(x)
\end{equation*}
where $F_{r}(\mathrm{M})=r^{1-\hat{\alpha}}F(r^{\hat{\alpha}-1}\mathrm{M})$, $\mathfrak{a}_{r}(x)=r^{\hat{\alpha}(q-p)}\mathfrak{a}(rx)$, $f_r(x)=r^{1-(1+p)\hat{\alpha}}f(rx)$. Therefore, $v$ satisfies the same structural conditions of $u$ in the interior whenever,
\[
1-(1+p)\hat{\alpha}\geq0,
\]
that is, $\hat{\alpha}\leq\frac{1}{1+p}$. Concerning the boundary condition, the rescaled profile $v$ satisfies
\[
\beta_{r}(x)\cdot Dv+\gamma_{r}(x)v=g_{r}(x),
\]
where $\beta_{r}(x)=\beta(rx)$, $\gamma_{r}(x)=r\gamma(rx)$, and $g_{r}(x)=r^{-\hat{\alpha}}g(rx)$. After a standard translation and normalization argument, we may assume that $g(0)=0$ and in this case $|g(rx)|\lesssim r^{\alpha}$. Therefore, in order for the rescaled boundary datum to remain uniformly bounded and satisfy the same regularity condition required in {\bf (A3)}, it is necessary to impose $\hat{\alpha}\leq \alpha$. This explains the restriction imposed by the boundary data on the admissible regularity exponent.
\smallskip

Now, to illustrate the sharpness of the maximal exponent in the scaling $\frac{1}{1+p}$, consider $u(x)=|x|^{1+\alpha}$ in the domain
\begin{equation*}
\Omega=\{x\in\mathrm{B}_{3}: x_{n}>-\varrho(|x|)\},
\end{equation*}
where $\varrho\in C^{\infty}_{0}(\mathbb{R}_{+})$ satisfies $\varrho=0$ in $[0,1]$ and $\varrho>0$ in $(1,2]$. Then, $u$ solves
\begin{equation*}
\left\{
\begin{array}{rcll}
 \left( |Du|^{p} +\mathfrak{a}(x)|Du|^{q} \right)\Delta u &=& f(x)\lesssim \bar{\mathrm{C}}(n,\alpha,\mathfrak{a},q)\,|x|^{\alpha(1+p)-1} & \mbox{in} \ \, \Omega_{1}=\mathrm{B}^{+}_{1}, \\
u_{\vec{\mathbf{n}}}&=& g(x) &\mbox{on} \ \, \partial \Omega_{1}=\mathrm{T}_{1},
\end{array}
\right.
\end{equation*}
where $\bar{\mathrm{C}}=(1+\alpha)^{1+q}(1+\|\mathfrak{a}\|_{L^{\infty}(\Omega_{1})})(n+\alpha-1)$, $u_{\vec{\mathbf{n}}}=\vec{\mathbf{n}}\cdot Du$, and
\begin{equation*}
g(x)=\frac{(1+\alpha)\varrho'(|x'|)}{\sqrt{1+\left(\varrho'(|x'|)\right)^2}}\,|x'|^{\alpha}.    
\end{equation*}

In particular, $f$ is bounded when we choose $\alpha = 1/(1+p)$, while $u$ cannot be more regular than $C^{1,\alpha}$.

\begin{remark}[One-phase degeneracy]
Theorem \ref{Thm1} also covers one-phase degenerate diffusion models. Indeed, when $\mathfrak{a} \equiv 0$, the equation reduces to a one-phase degeneracy problem. In particular, under \textbf{(A1)-(A3)}, viscosity solutions to the corresponding oblique boundary value problem satisfy the $C^{1,\nu}$ estimate of Theorem \ref{Thm1}.
\end{remark}

\subsubsection*{Challenges and strategy in the proof of Theorem \ref{Thm1}}

The approach to Theorem \ref{Thm1} relies on a polynomial approximation scheme based on affine profiles, with a decay rate of order $1+\nu$. However, the analysis is considerably more delicate, since these approximating profiles are not solutions to our doubly diffusive model. To illustrate this issue, let $u$ be a solution to \eqref{1} and define $v(x) := u(x) - \ell(x)$, where $\ell(x) = \vec{\xi} \cdot x + b$. Then $v$ satisfies
\begin{equation*}
\left\{
\begin{array}{rcll}
 \left( |Dv+\vec{\xi}|^p +\mathfrak{a}(x)|Dv+\vec{\xi}|^q \right)F(D^2v) &=& f(x)& \mbox{in} \ \, \Omega, \\
 \beta(x) \cdot Dv + \gamma(x)v&=& \tilde{g}(x) &\mbox{on} \ \, \partial \Omega,
\end{array}
\right.
\end{equation*}
where $\tilde{g}(x) := g(x) - \beta(x) \cdot \vec{\xi} - \gamma(x) \ell(x)$. At this stage, no compactness tools are directly available, since the translated problem does not enjoy a uniform elliptic structure. In particular, the coefficients in front of the operator depend on the vector $\vec{\xi}$ and may degenerate, which prevents the use of standard compactness arguments.

Another delicate issue concerns the behavior of these affine profiles near the boundary. More precisely, one must understand how they interact with the oblique boundary condition, since, in contrast to problems posed without boundary conditions, solutions here are required to satisfy an additional PDE on the boundary. This feature must be carefully incorporated into the analysis.

To overcome this difficulty, we must understand how the model behaves under translations by vectors $\vec{\xi} \in \mathbb{R}^n$. Our strategy can be summarized as follows:
\begin{itemize}
\item[$\bullet$] First, we show that the translated problem enjoys a Hölder-type modulus of continuity that is independent of the translation vector $\vec{\xi}$ (see Theorem \ref{Holder}).
\item[$\bullet$] Using geometric tangential analysis, and under suitable smallness assumptions on the source term, the boundary datum $g$, and the deviation of the vector field $\beta$ from a constant field, we prove that solutions to the translated problem with $\gamma = 0$ are close to a homogeneous almost-Neumann profile of the form \eqref{limitprofile} (Lemma \ref{approx}).
\item[$\bullet$] Building upon this smallness regime, we establish a universal improvement of flatness. More precisely, for any $0 < \bar{\alpha} < \alpha_0$ (cf. structural condition \textbf{(A3)}), there exist an affine profile $\ell$ and a radius $0 < \rho \le \frac{1}{2}$ such that solutions to \eqref{1.2} are approximated by $\ell$ at the rate $\rho^{1+\bar{\alpha}}$ (Lemma \ref{Lemma3.2}), and $\beta(0) \cdot D\ell = 0$.
\item[$\bullet$] With this tool at hand, we first prove Theorem \ref{Thm1} in the case $\gamma = 0$. By a discrete iteration, we construct a sequence of affine profiles that approximate the solution to our problem at the geometric rate $\rho^{j(1+\nu)}$ ($j \in \mathbb{N}$). This sequence converges to an affine profile with the desired properties. Finally, we reduce the general case to $\gamma = 0$ by exploiting the Hölder regularity of the solution itself.
\end{itemize}

The approach developed here unifies several models, including single-phase diffusion problems with Neumann boundary conditions \cite{BanVer22,Ric20}, obtained by taking  $\mathfrak{a} \equiv 0$, and $\beta = \vec{\mathbf{n}}$, as well as problems with oblique boundary conditions \cite{BesRicSil26,ByunKimOh25}, thereby providing a flexible analytical framework applicable to a broad class of diffusive problems.

\subsection{Some implications}

We now present some applications of our main theorem.

\subsubsection*{Quasiconvex and quasiconcave operators}

The regularity exponent $\alpha_{0}$ appearing in Theorem 1.2 depends on the interior regularity theory available for the governing operator $F$ (see \cite[Theorem 1.2]{LiZhang18}). Under additional structural assumptions on $F\in\mathcal{E}(\lambda,\Lambda)$, one may improve the baseline regularity and consequently obtain sharper boundary estimates, as already observed in the convex setting; see Remark \ref{Remark1.1}.

This improvement in the convex case originates from the classical Evans--Krylov theory \cite{Ev82,Kry82}, and has motivated the search for weaker structural assumptions on the operator $F$. Along this direction, several classes of operators have been investigated, including asymptotically convex operators in the works of Silvestre and Teixeira \cite{ST15} (see also \cite{PT16}), non-convex $C^{1}$-operators combined with flat solutions as considered by Dos Prazeres and Teixeira \cite{DosPraT16} (see also \cite{BDaSO26}), and operators with sufficiently small ellipticity aperture $\mathcal{A}_{F}:=\frac{\Lambda}{\lambda}-1$, as studied by Wu and Niu \cite{WuNiu23}.

In this line, Goffi \cite{Goffi24} established an Evans--Krylov theory in the subclass of $\mathcal{E}(\lambda,\Lambda)$ of \textit{quasiconvex and quasiconcave operators}. Recall that an operator $F$ is quasiconvex (respectively, quasiconcave) if
\begin{equation*}
F((1-t)\mathrm{M} + t\mathrm{N}) \leq \max\{F(\mathrm{M}),F(\mathrm{N})\} \quad (\text{resp. } F((1-t)\mathrm{M} + t\mathrm{N}) \geq \min\{F(\mathrm{M}),F(\mathrm{N})\}),
\end{equation*}
for all $\mathrm{M},\mathrm{N} \in \mathrm{Sym}(n)$ and $t \in [0,1]$.

In this regime, and inspired by Goffi's work, Bessa, Ricarte, and Silva established the corresponding boundary estimates for oblique boundary problems in \cite[Proposition 5.1]{BesRicSil26}. As a consequence, one has $\alpha_0=1$, and therefore it follows from Theorem~\ref{Thm1} that the optimal exponent is
\[
\bar{\nu}=\min\left\{\alpha,\frac{1}{1+p}\right\},
\]
whenever $\alpha<1$ for quasiconvex and quasiconcave operators. This observation is summarized by the following corollary.

\begin{corollary}\label{Corollary1.4}
Let $u$ be a viscosity solution to \eqref{1.2}, where $\Omega \subset \mathbb{R}^n$ is a $C^1$-bounded domain and $0 \in \partial \Omega_1$. Assume the structural conditions {\bf(A1)--(A3)} with $\alpha<1$ and that $F$ is either a quasiconvex or a quasiconcave operator. Then $u \in C^{1,\bar{\nu}}(\overline{\Omega_{\frac{1}{2}}})$ for 
\[
\bar{\nu}=\min\left\{\alpha, \frac{1}{1+p}\right\}.
\]
Moreover, the following estimate holds:
\begin{equation}\label{estofcor1.4}
\|u\|_{C^{1,\bar{\nu}}(\overline{\Omega_{\frac{1}{2}}})}
\leq \mathrm{C} \left( \|u\|_{L^\infty(\Omega_1)}
       + \|f\|_{L^\infty(\Omega_1)}^{\frac{1}{1+p}}
       + \|g\|_{C^{0,\alpha}(\partial \Omega_1)} \right),    
\end{equation}
where $\mathrm{C}>0$ depends only on $n$, $\lambda$, $\Lambda$, $p$, $\alpha$, $\delta_0$, $[\beta]_{C^{0,\alpha}(\partial \Omega_1)}$, $\|\gamma\|_{C^{0,\alpha}(\partial \Omega_1)}$, and the $C^1$ modulus of $\partial \Omega_1$. In particular, if $\alpha\geq \frac{1}{1+p}$ then $u\in C^{1,\frac{1}{1+p}}(\overline{\Omega_{\frac{1}{2}}})$.
\end{corollary}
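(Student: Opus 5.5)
The corollary follows by applying Theorem~\ref{Thm1} once we know that the baseline exponent in \textbf{(A3)} can be taken to be $\alpha_0=1$ for quasiconvex or quasiconcave $F$. The first step is to verify this. We invoke \cite[Proposition 5.1]{BesRicSil26}, which builds on Goffi's Evans--Krylov theory for these classes \cite{Goffi24}. It gives that every viscosity solution $\mathrm{h}$ of the limiting profile \eqref{limitprofile}, with constant oblique $\beta$, is $C^{1,1}$ (indeed $C^{2,\bar\alpha}$) up to $\mathrm{T}_{1/2}$, with
\[
\|\mathrm{h}\|_{C^{1,1}(\overline{\mathrm{B}^+_{1/2}})}\le \mathrm{C}_0\|\mathrm{h}\|_{L^\infty(\mathrm{B}_1^+)}.
\]
Here $\mathrm{C}_0$ depends only on $n,\lambda,\Lambda,\delta_0$. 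One point needs care: the class must be stable under the normalizations in the proof of Theorem~\ref{Thm1}. The rescalings $F_r(\mathrm{M})=r^{1-\hat\alpha}F(r^{\hat\alpha-1}\mathrm{M})$ and the limits of such operators must stay within the class. This holds because quasiconvexity and quasiconcavity are preserved under positive dilations in the argument and in the value. They are also preserved under locally uniform limits, since the defining inequality is closed. Hence every limiting profile arising in the compactness step (Lemma~\ref{approx}) again enjoys the $C^{1,1}$ estimate. So \textbf{(A3)} holds with $\alpha_0=1$ uniformly along the argument.

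The second step is to read off the exponent. Since $\alpha<1$ by hypothesis, we have $\min\{\alpha,\frac1{1+p}\}\le\alpha<1=\alpha_0$. We are therefore in the first alternative of Theorem~\ref{Thm1}, and so
\[
\nu=\min\left\{\alpha,\tfrac1{1+p}\right\}=\bar\nu .
\]
No loss $\alpha_0^-$ occurs. Estimate \eqref{1.4} then yields \eqref{estofcor1.4}. The constant's dependence on $\alpha_0$ disappears because $\alpha_0=1$ is now fixed, which gives exactly the dependence list stated. Finally, if $\alpha\ge\frac1{1+p}$, then $\bar\nu=\frac1{1+p}$, which is the last assertion.

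The main obstacle is the first step. We must make sure that the cited boundary $C^{1,1}$ (or $C^{2,\bar\alpha}$) estimate applies to \eqref{limitprofile} with a constant depending only on $n,\lambda,\Lambda,\delta_0$. We must also make sure that the compactness scheme never leaves the quasiconvex/quasiconcave class. If the class were not closed under the relevant limits, I would instead run the improvement-of-flatness lemma (Lemma~\ref{Lemma3.2}) directly with $\bar\alpha\in(\bar\nu,1)$. In that lemma only a $C^{1,\alpha_0}$ estimate with some $\alpha_0>\bar\nu$ is needed, and this is still furnished by Goffi's theory for each limiting operator.
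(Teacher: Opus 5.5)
Your proposal is correct and follows the paper's route: the paper also invokes \cite[Proposition 5.1]{BesRicSil26} (built on Goffi's theory) to get $\alpha_0=1$ in \textbf{(A3)}, and then applies Theorem~\ref{Thm1} in its first alternative, since $\min\{\alpha,\frac{1}{1+p}\}<1=\alpha_0$. Your added check that quasiconvexity and quasiconcavity are preserved under the dilations $c\,F(s\,\cdot)$ used in the rescalings, and under locally uniform limits, is left implicit in the paper; it is correct and does not change the argument.
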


\subsubsection*{Improved estimates at vanishing source points}

In view of Theorem \ref{Thm1}, we observe that both improved structural properties of the operator and refined information on the source term can yield enhanced regularity for solutions to \eqref{1.2}. In this subsection, we focus on the latter aspect and study how vanishing and Hölder continuity of the source term affect the regularity of solutions.

We now perform a pointwise analysis driven by the source term $f$. More precisely, we investigate the regularity of solutions to \eqref{1.2} when the source term enjoys an $\alpha'$-Hölder modulus of continuity at the origin. Namely, we assume that $f(0) = 0$ and that there exists a constant $\mathrm{C}_{\alpha'}$ such that
\begin{equation}\label{1.5}
|f(x)|\leq\mathrm{C}_{\alpha^{\prime}}|x|^{\alpha^{\prime}},\,\,\, \forall x\in\overline{\Omega_{1}}.
\end{equation}
In this setting, we obtain an improvement of regularity at points where the source term vanishes, under the assumption of Hölder continuity of $f$, as summarized in the following result.

\begin{theorem}[\bf Improved regularity along vanishing source points]\label{Thm2}
Let $\Omega \subset \mathbb{R}^n$ be a $C^1$-bounded domain with $0 \in \partial \Omega_1$, and let $u$ be a viscosity solution to \eqref{1.2}. Assume the structural conditions {\bf(A1)--(A3)} and that $f$ satisfy \eqref{1.5} and $f(0)=0$. Then $u$ is $C^{1,\nu'}$ at the origin, where
\begin{equation*}
\nu' := \min\left\{ \alpha_0^{-},\alpha ,\frac{1+\alpha'}{1+p} \right\}.
\end{equation*}
Specifically, if $\min\left\{\alpha ,\frac{1+\alpha'}{1+p}\right\}<\alpha$, there exist $0 < r_0 \leq \tfrac{1}{2}$ and a constant $\mathrm{C}>0$ such that
\begin{equation}\label{1.6}
\sup_{x \in \overline{\Omega_{r}}} \big| u(x) - \big(u(0) + Du(0) \cdot x \big) \big|
\leq \mathrm{C} \left( \|u\|_{L^\infty(\Omega_1)} + \|g\|_{C^{0,\alpha}(\partial \Omega_1)} \right) r^{1+\nu'},  \quad \forall r\in (0,r_{0}),
\end{equation}
for $\nu'=\min\left\{\alpha ,\frac{1+\alpha'}{1+p}\right\}$, where $\mathrm{C}$ depends only on $n$, $\lambda$, $\Lambda$, $p$, $\alpha_0$, $\alpha$, $\alpha'$, $\mathrm{C}_{\alpha'}$, $\delta_0$, $[\beta]_{C^{0,\alpha}(\partial \Omega_1)}$, $\|\gamma\|_{C^{0,\alpha}(\partial \Omega_1)}$, and the $C^1$ modulus of $\partial \Omega_1$. For the complementary case where $\min\left\{\alpha,\frac{1+\alpha'}{1+p}\right\}\geq \alpha_{0}$, it follows that, for any fixed $0<\varsigma<\alpha_{0}$, the estimate \eqref{1.6} is valid with $\nu'=\varsigma$, and the constant $\mathrm{C}$ now also depends on $\varsigma$.
\end{theorem}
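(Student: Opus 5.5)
We adapt the iterative scheme behind Theorem \ref{Thm1}, now exploiting the decay \eqref{1.5} of $f$ at the origin. First we reduce to $\gamma=0$, as in the proof of Theorem \ref{Thm1}. Since Theorem \ref{Thm1} gives $u\in C^{1,\nu}$ up to the boundary, the term $\gamma u$ is $C^{0,\alpha}$. We absorb it into the datum by setting $\tilde g:=g-\gamma u$, whose $C^{0,\alpha}$ norm is controlled by $\|u\|_{L^\infty}+\|f\|_{L^\infty}^{1/(1+p)}+\|g\|_{C^{0,\alpha}}$. By \eqref{1.5}, $\|f\|_{L^\infty}\le \mathrm{C}_{\alpha'}$, so this is a universal quantity depending on $\mathrm{C}_{\alpha'}$. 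Subtracting a suitable affine function, we normalize $\tilde g(0)=0$. We then scale, $u\mapsto u(rx)/K$ with $K$ proportional to $\|u\|_{L^\infty}+\|g\|_{C^{0,\alpha}}+1$, and flatten $\partial\Omega$ almost. This places us in the smallness regime of Lemma \ref{approx} and Lemma \ref{Lemma3.2}: $\|u\|_\infty\le1$, $\|f\|_\infty\le\varepsilon$, $[g]_{C^{0,\alpha}}\le\varepsilon$, $|\beta-\beta(0)|\le\varepsilon$, and $\partial\Omega$ is $\varepsilon$-close to $\mathrm{T}_1$. These smallness conditions are preserved because the rescaled source still satisfies $|f_r(x)|\le \mathrm{C}_{\alpha'} r^{1+\alpha'-(1+p)\nu'}|x|^{\alpha'}$, and the exponent is nonnegative exactly because $\nu'\le\frac{1+\alpha'}{1+p}$.

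The core step is an induction. Fix $\rho$ and $\varepsilon$ from Lemma \ref{Lemma3.2} with $\bar\alpha\in(\nu',\alpha_0)$, which is possible when $\nu'<\alpha_0$; in the complementary case take $\nu'=\varsigma<\bar\alpha<\alpha_0$. We construct affine functions $\ell_k(x)=a_k+\vec{\xi}_k\cdot x$ with $\beta(0)\cdot\vec{\xi}_k=0$ (up to the boundary-data correction) such that
\[
\sup_{\Omega_{\rho^k}}|u-\ell_k|\le\rho^{k(1+\nu')},\qquad |a_{k+1}-a_k|+\rho^k|\vec{\xi}_{k+1}-\vec{\xi}_k|\le \mathrm{C}\rho^{k(1+\nu')}.
\]
For the inductive step we set
\[
v_k(x)=\frac{(u-\ell_k)(\rho^k x)}{\rho^{k(1+\nu')}}.
\]
This function solves a translated problem
\[
\bigl(|Dv_k+\vec{\xi}_k\rho^{-k\nu'}|^p+\mathfrak{a}_k|Dv_k+\vec{\xi}_k\rho^{-k\nu'}|^q\bigr)F_k(D^2v_k)=f_k,
\qquad f_k(x)=\rho^{k(1-(1+p)\nu')}f(\rho^kx).
\]
Using \eqref{1.5}, $|f_k|\le \mathrm{C}_{\alpha'}\rho^{k(1+\alpha'-(1+p)\nu')}\le\varepsilon$ after the initial normalization. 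Note that this uses the vanishing at $0$, which replaces the constraint $\nu'\le 1/(1+p)$ of Theorem \ref{Thm1}. The boundary datum satisfies $g_k(x)=\rho^{-k\nu'}(g(\rho^kx)-\beta(\rho^kx)\cdot\vec{\xi}_k)$. Its smallness uses $\nu'\le\alpha$ and the $C^{0,\alpha}$ modulus of $\beta$, combined with $\beta(0)\cdot\vec{\xi}_k=0$ and the boundedness of $\vec{\xi}_k$. Since Lemma \ref{approx} and Theorem \ref{Holder} are uniform in the translation vector, Lemma \ref{Lemma3.2} applies to $v_k$. It yields an affine $\tilde\ell$ with $\sup_{\Omega_\rho}|v_k-\tilde\ell|\le\rho^{1+\bar\alpha}\le\rho^{1+\nu'}$, and scaling back gives $\ell_{k+1}$.

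The coefficients $(a_k,\vec{\xi}_k)$ are Cauchy, so they converge to $(u(0),Du(0))$. The standard telescoping argument then gives $|u(x)-u(0)-Du(0)\cdot x|\le \mathrm{C}r^{1+\nu'}$ on $\Omega_r$ for $r<r_0$. Undoing the normalization yields \eqref{1.6}, with the $\|u\|_\infty+\|g\|_{C^{0,\alpha}}$ dependence and $\mathrm{C}_{\alpha'}$ entering the constant.

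The main obstacle is the inductive step at the boundary. We must keep $g_k$ small uniformly in $k$ while $\vec{\xi}_k$ changes, and ensure $\beta$ rescaled from nonzero points does not spoil the condition $\beta(0)\cdot\vec{\xi}_k=0$. We would handle this exactly as in the iteration for Theorem \ref{Thm1}: bound $|\beta(\rho^kx)-\beta(0)|\,|\vec{\xi}_k|\lesssim\rho^{k\alpha}$, and use $\nu'\le\alpha$ so that $\rho^{-k\nu'}\rho^{k\alpha}\le1$, choosing the initial scaling small enough to make this at most $\varepsilon$.
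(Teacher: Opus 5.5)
Your proposal is correct and follows essentially the same route as the paper. You normalize to $\gamma=0$, $u(0)=g(0)=0$ with small data, then iterate Lemma~\ref{Lemma3.2} on $u_j(x)=(u-\ell_j)(\rho^j x)/\rho^{j(1+\nu')}$, where the only new ingredient is the bound $\|f_j\|_{L^\infty}\le \mathrm{C}_{\alpha'}\rho^{j(1+\alpha'-(1+p)\nu')}\le\eta$ coming from \eqref{1.5}. The one minor difference is that you remove $\gamma$ by invoking Theorem~\ref{Thm1} directly to get $\gamma u\in C^{0,\alpha}$, whereas the paper just refers back to the normalization in the proof of Theorem~\ref{Thm1}; both work.
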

The remainder of the manuscript is divided into three additional sections. Section \ref{Sec2} is devoted to some preliminary results and to the proof of Hölder regularity for the translated models. Section \ref{Sec3} is dedicated to the development of our approach and to the proof of Theorem \ref{Thm1}. Finally, Section \ref{Sec4} contains the proof of Theorem \ref{Thm2} as well as some of its consequences.


\section{Preliminaries and auxiliary results}\label{Sec2}

For the reader’s convenience, we collect in this section some preliminaries and key auxiliary results related to problem \eqref{1}.

Initially, we introduce the concept of viscosity solution for fully nonlinear models with a double degeneracy law and oblique boundary condition:
\begin{equation}\label{model}
\left\{
\begin{array}{rcll}
 \mathcal{H}(x,Du)F(D^2u) &=& f(x)& \mbox{in} \ \, \Omega, \\
 \beta(x) \cdot Du + \gamma(x)u&=& g(x) &\mbox{on} \ \, \Gamma\subset \partial \Omega,
\end{array}
\right.
\end{equation}
where \(\Gamma\) is a relatively open set of the boundary \(\partial \Omega\).
\begin{definition}[\bf Viscosity solution]\label{def:viscosity_sol}
Let \(F\) be a continuous function in all variables, and assume \(f\in C^{0}(\Omega)\). We say that a function \(u \in C^{0}(\Omega\cup \Gamma)\) is a subsolution (resp. supersolution) to problem \eqref{model} if, for a given \(x_{0} \in \Omega\cup\Gamma\) and \(\varphi \in C^{2}(\Omega\cup\Gamma)\) such that \(u - \varphi\) attains a local maximum (resp. minimum) at \(x_{0}\), we have
\begin{equation*}
\left\{
\begin{array}{rcll}
 \mathcal{H}(x_{0},D\varphi(x_{0}))F(D^2\varphi(x_{0})) &\geq\ (\leq)&  f(x_{0})& \mbox{if } x_{0}\in\Omega, \\
 \beta(x_{0}) \cdot D\varphi(x_{0}) + \gamma(x_{0})\varphi(x_{0})&\geq\ (\leq)& g(x_{0}) &\mbox{if } x_{0}\in\Gamma.
\end{array}
\right.
\end{equation*}
We say that \(u\) is a viscosity solution of problem \eqref{model} if it is both a subsolution and a supersolution. 
\end{definition}

For the statement of the forthcoming results and the remainder of the manuscript, we shall consider an “almost \(C^{1}\)-flat” domain \(\Omega_{1}\). More precisely, we always assume that \(0\in \partial \Omega_{1}\) and that the boundary \(\partial\Omega_{1}\) can be represented as the graph of a \(C^{1}\) function (recall that in our main results \(\Omega\) is a \(C^{1}\) domain) \(\varphi=\varphi_{\Omega}:\mathrm{T}_{1}\to\mathbb{R}\). That is,
\begin{equation*}
\partial\Omega_{1}=\{(x',x_{n})\in \mathrm{B}_{1}\,|\,x_{n}=\varphi(x')\}\,,\qquad \Omega_{1}\supset\{(x',x_{n})\in \mathrm{B}_{1}\,|\,x_{n}>\varphi(x')\}.
\end{equation*}
We adopt the Hölder space notation for the boundary data \(\beta\), \(\gamma\), and \(g\) under this change of coordinates. For instance, we denote \(g\in C^{0,\alpha}(\mathrm{T}_{1})\) and write \(g(x)=g(x',\varphi(x'))\). Note that the inner unit normal vector to \(\partial\Omega_{1}\) at a point 
\((x',\varphi(x'))\) is given by
\begin{equation*}
\vec{\mathbf{n}}(x',\varphi(x'))=\frac{1}{\sqrt{1+|D\varphi(x')|^{2}}}(-D\varphi(x'),1).
\end{equation*}
Assuming that \([\varphi]_{C^{1}(\mathrm{T}_{1})}\leq \frac{\delta_{0}}{2}\) we have \(\beta_{n}\geq \frac{\delta_{0}}{2}\) whenever \(\beta\cdot \vec{\mathbf{n}}\geq \delta_{0}\), where \(\beta=(\beta',\beta_{n})\).
Instead of the original oblique boundary condition, we always assume that the vector field \(\beta\) and \(\varphi\) satisfy
\begin{equation*}
\varphi(0)=0,\quad [\varphi]_{C^{1}(\mathrm{T}_{1})}\leq \frac{\delta_{0}}{2},\quad \beta_{n}\geq \frac{\delta_{0}}{2}\quad\text{and}\quad\|\beta\|_{L^{\infty}(\mathrm{T}_{1})}\leq 1.
\end{equation*}

We are now concerned with a compactness tool for the following translated problem
\begin{equation} \label{Translated problem}
\left\{
\begin{array}{rclcl}
\mathcal{H}(x,D u-\vec{\xi})F(D^2 u) &=& f(x) & \mbox{in} & \Omega_r, \\
\beta \cdot D u &=& g(x) & \mbox{on} & \partial\Omega_{r},
\end{array}
\right.
\end{equation}
whose estimates are independent of the vector \(\vec{\xi}\). For this, we introduce the following notation for \(0<\lambda\leq\Lambda\):
\begin{eqnarray*}
\mathcal{P}^{+}_{\lambda,\Lambda}(D^{2}u,Du)&=&\Lambda\operatorname{tr}(D^{2}u)^{+}-\lambda\operatorname{tr}(D^{2}u)^{-}+\Lambda|Du|,\\
\mathcal{P}^{-}_{\lambda,\Lambda}(D^{2}u,Du)&=&\lambda\operatorname{tr}(D^{2}u)^{+}-\Lambda\operatorname{tr}(D^{2}u)^{-}-\Lambda|Du|.
\end{eqnarray*}

Concerning compactness, we have the following result proved in \cite[Theorem 3.1]{ByunKimOh25}.
\begin{theorem}\label{Theorem3.1}
Assume that \(\Omega\) is a \(C^{1}\)-domain. Then there exists a small \(\mu=\mu(\delta_{0})\leq \frac{\delta_{0}}{2}\) such that if \([\varphi_{\Omega}]_{C^{1}}\leq \mu\), for any \(u\in C^{0}(\overline{\Omega_{1}})\) satisfying, in the viscosity sense,
\begin{equation}\label{3.2}
\left\{
\begin{array}{rclcl}
\mathcal{P}^{-}_{\lambda,\Lambda}(D^{2}u,Du) &\leq& \mathrm{C}_{0} & \mbox{in} & \{|Du-\vec{\xi}|>\theta\}\cap \Omega_1, \\
\mathcal{P}^{+}_{\lambda,\Lambda}(D^{2}u,Du) &\geq& -\mathrm{C}_{0} & \mbox{in} & \{|Du-\vec{\xi}|>\theta\}\cap \Omega_1, \\
\beta \cdot D u &=& g(x) & \mbox{on} & \partial\Omega_{1},\\
\|u\|_{L^{\infty}(\Omega_{1})} &\leq& 1, &  & \\
\|g\|_{L^{\infty}(\mathrm{T}_{1})} &\leq& \mathrm{C}_{0}, &  & 
\end{array}
\right.
\end{equation}
for some \(0<\theta\leq 1\), then \(u\in C^{0,\alpha^{\prime}}(\overline{\Omega_{\frac{1}{2}}})\) for some \(\alpha^{\prime}=\alpha^{\prime}(n,\lambda,\Lambda,\delta_{0})>0\). Moreover,
\begin{equation*}
\|u\|_{C^{0,\alpha^{\prime}}(\overline{\Omega_{\frac{1}{2}}})}\leq \mathrm{C},
\end{equation*}
where \(\mathrm{C}=\mathrm{C}(n,\lambda,\Lambda,\delta_{0},\mathrm{C}_{0})>0\) is a constant independent of \(\vec{\xi}\).
\end{theorem}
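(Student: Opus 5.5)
We prove Theorem \ref{Theorem3.1} by a Krylov--Safonov-type oscillation decay up to the boundary, adapted to the ``gradient-constrained'' setting of Imbert--Silvestre. The key observation is that the inequalities in \eqref{3.2} are only imposed where $|Du-\vec{\xi}|>\theta$. Hence the proof must never test $u$ with functions whose gradient falls in the exceptional ball $\mathrm{B}_\theta(\vec{\xi})$ at the contact point. The plan is to establish a measure-to-pointwise (weak Harnack/``$L^\varepsilon$'') lemma for supersolutions near the oblique boundary, in which every test function is a barrier or a paraboloid with large slope. Its constants must depend only on $n,\lambda,\Lambda,\delta_0,\mathrm{C}_0$. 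Oscillation decay then follows in the standard way, and it is uniform in $\vec{\xi}$.

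\textbf{Step 1: interior points.} Reproduce the interior argument of Imbert--Silvestre \cite{IS}, a sliding-paraboloid / ABP-type measure estimate. Touch $u$ from below by paraboloids $-\frac{K}{2}|x-y|^2 + c$ with vertex $y$ ranging over a set of positive measure. The contact gradients $K(y-x)$ then sweep a set of measure $\gtrsim K^n |\cdot|$. Only a subset of measure $\lesssim \theta^n$ of these gradients can lie in $\mathrm{B}_\theta(\vec{\xi})$, regardless of $\vec{\xi}$. Choosing $K$ large relative to $\theta\le 1$, most contact points are admissible, so the equation holds there. This gives the basic measure estimate with constants independent of $\vec{\xi}$. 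The first-order term $\Lambda|Du|$ is harmless at unit scale, since after rescaling $x\mapsto rx$ it gets multiplied by $r$.

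\textbf{Step 2: boundary points.} Straighten the picture using $[\varphi]_{C^1}\le\mu$: the boundary lies in a thin cone around $\mathrm{T}_1$ and $\beta_n\ge\delta_0/2$. Near the boundary, use barriers that are strictly monotone in the $\beta$-direction, for instance
\[
\psi(x)=|x-z|^{-\kappa}\ \text{ or }\ \psi(x) = -\tfrac{K}{2}|x-y|^2 + M x_n ,
\]
with the pole $z$ placed below $\partial\Omega$ along $-\beta$. The aim is that at any boundary contact point $\beta\cdot D\psi < -\mathrm{C}_0 \le g$. This contradicts the supersolution boundary inequality, so contact can occur only in the interior. Smallness of $\mu$ relative to $\delta_0$ keeps the sign of $\beta\cdot D\psi$ uniform along the tilted boundary. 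At interior contact points, $|D\psi|$ is of order $M$ or $\kappa$, which exceeds $1+|\vec{\xi}|$ unless $\vec{\xi}$ is large. If $|\vec{\xi}|$ is large, we instead use $|D\psi|\lesssim$ small, so that $|D\psi-\vec{\xi}|>\theta$ automatically. Either way the exceptional set is avoided. These are the same two regimes as in \cite{IS}, where the dichotomy on $|\vec{\xi}|$ is exactly what makes the estimate uniform.

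\textbf{Step 3: conclusion and main obstacle.} Combining Steps 1 and 2 with a Calder\'on--Zygmund cube decomposition on half-balls gives a boundary weak Harnack inequality for $u$ and $-u$. From it we obtain $\operatorname{osc}_{\Omega_{r/2}}u \le (1-\eta)\operatorname{osc}_{\Omega_r}u + \mathrm{C} r$ for boundary centers, and the interior version is analogous. Iterating yields the $C^{0,\alpha'}$ bound, and a covering argument gives it on $\overline{\Omega_{1/2}}$. The main obstacle I expect is Step 2, the boundary barrier. It must simultaneously (i) violate the oblique condition with a non-constant $\beta$ on a merely $C^1$, non-flat boundary, and (ii) have gradient outside $\mathrm{B}_\theta(\vec{\xi})$ at every interior contact point for arbitrary $\vec{\xi}$. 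I would first try the case split $|\vec{\xi}|\le A$ versus $|\vec{\xi}|>A$, with $A$ depending only on the universal data, choosing the barrier slope accordingly. The scale at which the barrier becomes effective is then $\theta$-dependent, but only through constants absorbed into $\mathrm{C}$, so the resulting estimate remains independent of $\vec{\xi}$.
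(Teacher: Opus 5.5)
The paper does not prove this statement; it quotes it from \cite[Theorem 3.1]{ByunKimOh25}. So your sketch has to stand on its own, and its foundation, the measure estimate of Step 1, is not valid as stated.

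You claim that the contact gradients $K(y-x)$ sweep a set of measure $\gtrsim K^n|V|$, of which at most $\lesssim\theta^n$ is exceptional. This is false, because the vertex-to-gradient map can collapse. Take $|\vec{\xi}|<1$ and $u(x)=\vec{\xi}\cdot x$, which satisfies the interior inequalities of \eqref{3.2} vacuously. Every vertex $y$ then produces the contact point $y-\vec{\xi}/K$ with contact gradient exactly $\vec{\xi}$, for every $K$.

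What the area formula actually gives is weaker. The vertices with inadmissible contact lie in $\Gamma_{\mathrm{bad}}+\mathrm{B}_{\theta/K}(\vec{\xi}/K)$, a translated $\theta/K$-neighbourhood of the bad contact set. That set may carry a fixed fraction of $|V|$ even when $|\Gamma_{\mathrm{bad}}|$ is tiny. On $\Gamma_{\mathrm{bad}}$ itself the Jacobian of the vertex map is uncontrolled, because the inequalities are unavailable there. Taking $K$ large therefore does not make the exceptional set negligible.

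You need one of two things:
\begin{itemize}
\item a covering argument that uses the estimate only at scales above $\theta/K$; or
\item test functions whose contact gradients avoid $\mathrm{B}_\theta(\vec{\xi})$ by construction, which means running your $|\vec{\xi}|$-dichotomy already in the interior.
\end{itemize}
For $|\vec{\xi}|>A$, paraboloids with slope at most $K\,\mathrm{diam}<A-1$ are fine. For $|\vec{\xi}|\le A$ they are not, since their slope vanishes at the vertex. Adding $Mx_n$ does not help: $-\frac{K}{2}|x-y|^2+Mx_n$ is again a paraboloid, with vertex $y+\frac{M}{K}e_n$. Either contact can occur near that vertex with small slope, or, if $M\gg K\,\mathrm{diam}$, the profile is essentially linear. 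In that case it does not localize, and the minimum of $u-\psi$ can escape to the upper edge of the box, where no equation holds. What is needed in this regime is a localizing profile that is steep away from its vertex, such as sliding cusps $-K|x-y|^{1/2}$. For these the vertex is recoverable from $(x,D\psi(x))$, and touchings at the vertex are counted directly in the contact set.

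Step 2 has the boundary inequality reversed. By Definition~\ref{def:viscosity_sol}, a supersolution touched from below at $x_0\in\partial\Omega_1$ satisfies $\beta\cdot D\psi(x_0)\le g(x_0)$. So $\beta\cdot D\psi<-\mathrm{C}_0\le g$ is consistent with it and excludes nothing. To forbid boundary contact from below you need $\beta\cdot D\psi>\mathrm{C}_0$, i.e.\ a profile increasing along $\beta$. Your $|x-z|^{-\kappa}$, with pole below $\partial\Omega$ in the direction $-\beta$, decreases along $\beta$; it would only serve for subsolutions touched from above.

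Even after this correction, the obstacle you identify remains open. For $|\vec{\xi}|\le A$ the boundary profile must at once increase along a non-constant oblique $\beta$ on a merely $C^1$-flat boundary, be steep at every contact point, and localize. Until this and a correct interior estimate are supplied, the Calder\'on--Zygmund iteration of Step 3 has nothing to iterate.
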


As a consequence of the previous result, we address the Hölder regularity for problem \eqref{Translated problem}.
\begin{theorem}[\bf Hölder regularity]\label{Holder}
Assume the structural conditions {\bf (A1)--(A3)} are valid. Let \(u\) be a viscosity solution to \eqref{Translated problem}. Assume that
\begin{equation*}
\max\{\|u\|_{L^{\infty}(\Omega_{r})},\|f\|_{L^{\infty}(\Omega_{r})},\|g\|_{L^{\infty}(\mathrm{T}_{r})}\}\leq 1.    
\end{equation*}
Then there exists \(\alpha'=\alpha'(n,r,\lambda,\Lambda,\delta_{0})\in (0,1)\) such that \(u\in C^{0,\alpha'}(\overline{\Omega_{\frac{r}{2}}})\). Moreover,
\begin{equation*}
\|u\|_{C^{0,\alpha'}(\overline{\Omega_{\frac{r}{2}}})}\leq \mathrm{C},
\end{equation*}
where \(\mathrm{C}=\mathrm{C}(n,r,\lambda,\Lambda,\delta_{0})>0\) is a constant independent of \(\vec{\xi}\).
\end{theorem}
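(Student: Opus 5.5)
The plan is to reduce Theorem \ref{Holder} to the compactness estimate of Theorem \ref{Theorem3.1}. The key observation is that, where the shifted gradient $|Du-\vec{\xi}|$ is bounded away from zero, the degenerate factor $\mathcal{H}(x,Du-\vec{\xi})$ is bounded below. There the equation in \eqref{Translated problem} becomes a uniformly elliptic Pucci-type inequality with bounded right-hand side, with all constants independent of $\vec{\xi}$.

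\textbf{Step 1: reduction to unit scale.} First we normalize the domain. Set $v(x)=u(rx)$ on $\Omega^{(r)}_1:=\frac{1}{r}\Omega_r$, possibly after a further rescaling $x\mapsto \mu x/r$. The aim is that $[\varphi_{\Omega}]_{C^1}$ becomes smaller than the threshold $\mu(\delta_0)$ of Theorem \ref{Theorem3.1}, which uses the $C^1$ modulus of $\partial\Omega$. The rescaled function solves
\[
\mathcal{H}_r\bigl(x,Dv-r\vec{\xi}\bigr)F_r(D^2v)=r^{2}f(rx),\qquad \beta(rx)\cdot Dv=r\,g(rx),
\]
with $F_r(M)=r^2F(r^{-2}M)$ still in $\mathcal{E}(\lambda,\Lambda)$. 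The new vector $r\vec{\xi}$ is arbitrary, which is harmless because the estimates must not depend on it. The modulating coefficient becomes $\mathfrak{a}_r\ge 0$ and carries the corresponding powers of $r$. All data bounds are preserved up to constants depending on $r$, which explains the $r$-dependence in the statement. A covering argument then transfers the estimate from $\overline{\Omega_{1/2}}$ at small scales back to $\overline{\Omega_{r/2}}$.

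\textbf{Step 2: Pucci inequalities off the degenerate set.} Fix $\theta=1$. Let $x_0$ be a point of the region $\{|Dv-\vec{\xi}|>1\}$ where a test function $\phi$ touches $v$ from below, so that $|D\phi(x_0)-\vec{\xi}|>1$. Since $\mathfrak{a}\ge0$, we have $\mathcal{H}(x_0,D\phi-\vec{\xi})\ge |D\phi-\vec{\xi}|^p\ge1$. The supersolution inequality $\mathcal{H}F(D^2\phi)\le f$ then gives $F(D^2\phi(x_0))\le \|f\|_\infty\le 1$, because dividing by a factor $\ge1$ only helps when $f\ge0$ and makes the right-hand side nonpositive otherwise. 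By \textbf{(A1)} and $F(0)=0$, this yields $\mathcal{P}^-_{\lambda,\Lambda}(D^2\phi)\le 1$, and a fortiori $\mathcal{P}^-_{\lambda,\Lambda}(D^2\phi,D\phi)\le 1$. The subsolution case symmetrically gives $\mathcal{P}^+_{\lambda,\Lambda}(D^2v,Dv)\ge -1$. The boundary condition $\beta\cdot Dv=g$ holds in the viscosity sense with $\|g\|_\infty\le 1$, and $\|v\|_\infty\le1$. This is exactly the structure \eqref{3.2} with $\mathrm{C}_0=1$.

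\textbf{Step 3: conclusion and the main obstacle.} Theorem \ref{Theorem3.1} now gives $v\in C^{0,\alpha'}(\overline{\Omega_{1/2}})$, with a bound independent of $\vec{\xi}$. Scaling back yields the claim with $\alpha'$ and $\mathrm{C}$ depending on $n,r,\lambda,\Lambda,\delta_0$.

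The main obstacle is the viscosity-sense verification in Step 2. The inequalities must hold only at test functions whose gradient at the contact point satisfies $|D\phi-\vec{\xi}|>\theta$, and this has to be matched with the precise meaning of ``in $\{|Du-\vec{\xi}|>\theta\}$'' in Theorem \ref{Theorem3.1}, which concerns test-function gradients. A second point to check is the boundary condition in the viscosity sense of Definition \ref{def:viscosity_sol}, with no restriction on the gradient. The geometric rescaling of Step 1 is a further delicate point, since it must make the flatness small while keeping $\beta_n\ge\delta_0/2$. I would first check that the proof in \cite{ByunKimOh25} is stated in precisely this test-function sense, so that it can be applied verbatim.
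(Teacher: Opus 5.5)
Your Steps~2--3 are exactly the paper's proof. With $\theta=1$, the bound $\mathcal{H}(x_0,D\phi(x_0)-\vec{\xi})\geq|D\phi(x_0)-\vec{\xi}|^{p}\geq 1$ (from $\mathfrak{a}\geq 0$) turns both viscosity inequalities into \eqref{3.2} with $\mathrm{C}_0=1$, and Theorem~\ref{Theorem3.1} gives the $\vec{\xi}$-independent estimate. Your symmetric use of $\mathcal{H}\geq 1$ in the subsolution case is in fact the correct justification; the paper's detour through the upper bound $\mathcal{H}\leq(1+\mathfrak{a}(x_0))|D\phi(x_0)-\vec{\xi}|^{q}$ is not what yields $F(D^2\phi(x_0))\geq-\|f\|_{L^\infty}$.

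The paper omits your Step~1: it simply sets $r=1$ and tacitly assumes the flatness hypothesis of Theorem~\ref{Theorem3.1}. If you keep Step~1, three points need fixing. A pure dilation cannot push $\sup|D\varphi_{\Omega}|$ below $|D\varphi_{\Omega}(0)|$, so you also need a rotation making $D\varphi_{\Omega}(0)=0$. After that rotation, the admissible scale depends on the $C^1$ modulus of $\partial\Omega$, not only on $n,r,\lambda,\Lambda,\delta_0$, so your stated dependence of the constants is not justified. Finally, the rescaled source term is $r^{p+2}f(rx)$ rather than $r^{2}f(rx)$, which is harmless for $r\leq 1$.
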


\begin{proof}
For convenience, assume that \(r = 1\). To establish the desired result, we show that, for \(\theta = 1\), the function \(u\) is a solution of system \eqref{3.2}. Indeed, let \(\phi \in C^{2}\) be a test function such that \(u - \phi\) attains a local minimum at \(x_{0} \in \Omega_{1}\) and
\begin{equation*}
|D\phi(x_{0}) - \vec{\xi}| > 1.
\end{equation*}
In this case, since \(u\) solves \eqref{Translated problem}, we have
\begin{equation}\label{3.3}
f(x_{0})\geq \mathcal{H}(x_{0},D\phi(x_{0})-\vec{\xi})F(D^{2}\phi(x_{0})).
\end{equation}
From the lower bound on \(|D\phi(x_{0}) - \vec{\xi}|\), we obtain the following lower estimate for the two-phase diffusion term:
$$
\mathcal{H}(x_0,D\phi(x_0)-\vec{\xi})=|D\phi(x_0)-\vec{\xi}|^p  + \mathfrak{a}(x_0)|D\phi(x_0)-\vec{\xi}|^q \ge |D\phi(x_0)-\vec{\xi}|^p \ge 1
$$
since $\mathfrak{a}(\cdot) \ge 0$ and $p>0$.
Thus, using this estimate together with the structural condition \textbf{(A1)} in \eqref{3.3}, we conclude that
\[
\mathcal{P}^{-}_{\lambda,\Lambda}(D^{2}\phi(x_{0}),D\phi(x_{0}))\leq \|f\|_{L^{\infty}(\Omega_{1})}\leq 1:=\mathrm{C}_{0}.
\]
Therefore, \(u\) satisfies 
\begin{equation*}
\mathcal{P}^{-}_{\lambda,\Lambda}(D^{2}u,Du)\leq \mathrm{C}_{0}\quad \text{in } \{|Du-\vec{\xi}|>1\}\cap \Omega_{1}. 
\end{equation*}

On the other hand, concerning the second condition, let \(\phi\) be a test function such that \(u - \phi\) attains a local maximum at \(x_{0} \in \Omega_{1}\) and 
\begin{equation*}
|D\phi(x_{0}) - \vec{\xi}| > 1.
\end{equation*}
Proceeding as in the previous case, we derive the following estimate for the diffusive term:
\begin{equation*}
\mathcal{H}(x_{0},D\phi(x_{0})-\vec{\xi})\leq (1+\mathfrak{a}(x_0))|D\phi(x_{0})-\vec{\xi}|^{q}. 
\end{equation*}
Consequently, using this estimate and the fact that \(u\) is a subsolution to \eqref{Translated problem}, it follows that
\begin{equation*}
-\|f\|_{L^{\infty}(\Omega_{1})}
\leq \frac{-\|f\|_{L^{\infty}(\Omega_{1})}}{(1+\mathfrak{a}(x_0))|D\phi(x_{0})-\vec{\xi}|^{q}}\leq F(D^{2}\phi(x_{0}))\leq \mathcal{P}^{+}_{\lambda,\Lambda}(D^{2}\phi(x_{0}),D\phi(x_{0})), 
\end{equation*}
where, in the last estimate, we have again used the structural condition \textbf{(A1)}.

In other words, \(u\) satisfies
\begin{equation*}
\mathcal{P}^{+}_{\lambda,\Lambda}(D^{2}u,Du)\geq -\mathrm{C}_{0}\quad \text{in } \{|Du-\vec{\xi}|>1\}\cap \Omega_{1}. 
\end{equation*}
Therefore, we may apply Theorem \ref{Theorem3.1} to conclude the desired result.
\end{proof}

To conclude this section, we establish the following cutting lemma for the double phase degeneracy term.
\begin{lemma}[{\bf Cutting Lemma}] \label{CuttLemma}
Let \(\vec{\xi} \in \mathbb{R}^n\) and \(u \) be a viscosity solution to
\[
\left\{
\begin{array}{rclcl}
\mathcal{H}(x,Du-\vec{\xi})F(D^2 u) &=& 0 & \mbox{in} & \mathrm{B}^{+}_{r}, \\
\beta \cdot D u &=& g(x) & \mbox{on} & \mathrm{T}_{r}. \\
\end{array}
\right.
\]
Assume that the structural conditions \textbf{(A1)}-\textbf{(A2)} hold. Then \(u\) is also a viscosity solution to
\begin{equation}\label{probhom}
\left\{
\begin{array}{rclcl}
F(D^2 u) &=& 0 & \mbox{in} & \mathrm{B}^{+}_{r}, \\
\beta \cdot D u &=& g(x) & \mbox{on} & \mathrm{T}_{r}. \\
\end{array}
\right.
\end{equation}
\end{lemma}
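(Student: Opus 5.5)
The boundary condition is the same in both problems, so only the interior equation needs checking. I would follow the standard cutting argument of Imbert--Silvestre, adapted to the double phase weight and the translation $\vec{\xi}$. By symmetry it suffices to show that $u$ is a viscosity supersolution of $F(D^2u)=0$ in $\mathrm{B}^+_r$; the subsolution case is identical with inequalities reversed. The key observation is that for $\vec{\eta}\neq 0$ one has $\mathcal{H}(x,\vec{\eta})\geq |\vec{\eta}|^p>0$ by \textbf{(H1)}--\textbf{(H2)}. Hence, whenever a test function $\phi$ touches $u$ from below at $x_0\in \mathrm{B}^+_r$ with $D\phi(x_0)\neq\vec{\xi}$, we may divide $\mathcal{H}(x_0,D\phi(x_0)-\vec{\xi})F(D^2\phi(x_0))\leq 0$ by the positive factor and obtain $F(D^2\phi(x_0))\leq 0$ directly.

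The remaining case is $D\phi(x_0)=\vec{\xi}$, where the equation gives no information. Arguing by contradiction, I would suppose $F(D^2\phi(x_0))>0$. After replacing $\phi$ by $\phi-\varepsilon|x-x_0|^2$ (small $\varepsilon$, using continuity of $F$ and \textbf{(A1)}), I may assume the touching is strict. Also $D^2\phi(x_0)$ has at least one positive eigenvalue: otherwise ellipticity with $F(0)=0$ gives $F(D^2\phi(x_0))\leq \mathcal{P}^+(D^2\phi(x_0))\leq 0$. I would then split into two subcases.

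\emph{Case 1: $D^2\phi(x_0)\geq 0$ fails in a controlled way.} Write $\mathbb{R}^n=E\oplus E^\perp$, with $E$ spanned by the eigenvectors of $D^2\phi(x_0)$ with positive eigenvalues, and perturb to $\phi_\delta(x)=\phi(x)+\delta\,|P_E(x-x_0)|$, where $P_E$ is the orthogonal projection onto $E$. Since $u-\phi$ has a strict local minimum at $x_0$, $u-\phi_\delta$ attains a local minimum at some $x_\delta\to x_0$ as $\delta\to0$.

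\emph{Case 2: contact points of the perturbed function.} If $x_\delta$ satisfies $P_E(x_\delta-x_0)\neq 0$, then $\phi_\delta$ is smooth there. Its Hessian is $D^2\phi(x_\delta)$ plus a nonnegative matrix, and its gradient equals $D\phi(x_\delta)+\delta e$ for a unit vector $e\in E$. This gradient differs from $\vec{\xi}$ for $\delta$ small, except in degenerate situations that can be avoided by the choice of $e$ via the positive directions. Thus the first step applies and gives $F(D^2\phi_\delta(x_\delta))\leq 0$, and letting $\delta\to0$ gives $F(D^2\phi(x_0))\leq 0$, a contradiction. If instead $P_E(x_\delta-x_0)=0$, then for every unit $e\in E$, $u$ is touched from below by $\phi+\delta\,e\cdot(x-x_0)$ plus a constant at $x_\delta$. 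Its gradient $D\phi(x_\delta)+\delta e$ is $\neq\vec{\xi}$ for a suitable choice of $e$, so again $F(D^2\phi(x_\delta))\leq 0$, and passing to the limit yields the contradiction.

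The main obstacle is this degenerate-gradient case. The delicate point is ensuring the perturbed gradients avoid $\vec{\xi}$ exactly while keeping contact points interior to $\mathrm{B}^+_r$. This is handled because $x_0$ is interior, so small perturbations stay away from $\mathrm{T}_r$ and the boundary condition never interferes.
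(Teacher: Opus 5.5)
Your strategy matches the paper's. You divide by $\mathcal{H}(x_0,D\phi(x_0)-\vec{\xi})\geq|D\phi(x_0)-\vec{\xi}|^p>0$ when the gradient is nondegenerate, and otherwise use the Imbert--Silvestre perturbation $\phi_\delta=\phi+\delta|P_E(x-x_0)|$. Treating $\vec{\xi}$ directly instead of passing to $w=u-\vec{\xi}\cdot x$ is immaterial. Your handling of the contact case $P_E(x_\delta-x_0)=0$, using the freedom $e\mapsto -e$, is correct.

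The gap is in the other case, $P_E(x_\delta-x_0)\neq 0$, which is the actual content of the lemma; the paper only delegates it to Imbert--Silvestre. There $e=P_E(x_\delta-x_0)/|P_E(x_\delta-x_0)|$ is forced by $x_\delta$, so no ``choice of $e$'' is available. Your sentence therefore does not prove $D\phi_\delta(x_\delta)\neq\vec{\xi}$. Your closing remark about $x_0$ being interior concerns the boundary, not this point. A symptom of the gap: your argument never actually uses that $E$ is the \emph{positive} eigenspace, except to get $E\neq\{0\}$. The Hessian of $y\mapsto|P_Ey|$ is positive semidefinite for every subspace $E$. Yet the sign of $M$ on $E$ is exactly what rules out a degenerate gradient.

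The missing computation is as follows. Let $M$ be the Hessian at $x_0$ of the test function actually used, after your $\varepsilon$-modification. Take a quadratic test function $\phi(x)=\phi(x_0)+\vec{\xi}\cdot(x-x_0)+\frac12(x-x_0)^tM(x-x_0)$ and set $y=x_\delta-x_0$. Since $E$ is spanned by eigenvectors of $M$ and $M$ is positive on $E$, you get
\[
e\cdot\big(D\phi_\delta(x_\delta)-\vec{\xi}\big)=\delta+\frac{(P_Ey)^tM\,P_Ey}{|P_Ey|}\geq\delta>0 .
\]
Reducing to such $\phi$ is harmless, as in the paper's normalization. Replace a $C^2$ test function by its second-order Taylor polynomial with Hessian $D^2\phi(x_0)-2\varepsilon I$; it still touches strictly from below. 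Then let $\varepsilon\to0$ using \textbf{(A1)}.

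If you prefer to keep a general $C^2$ test function, you must also control the remainder $D\phi(x_\delta)-\vec{\xi}-My=o(|y|)$. The separation $u-\phi\geq\varepsilon|x-x_0|^2$ together with minimality gives $\varepsilon|y|^2\leq\delta|P_Ey|\leq\delta|y|$. Hence $|y|\leq\delta/\varepsilon$, so the remainder is $o(\delta)$ and does not spoil the lower bound $\delta$.

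With this step inserted, the rest of your argument goes through: ellipticity discards the semidefinite term, then $\delta\to0$, then the contradiction with $F(D^2\phi(x_0))>0$.
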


\begin{proof}
We borrow ideas from \cite[Lemma 6]{IS} and \cite[Lemma 4.1]{DeFil21} for the proof. We first prove the result in the case without translation, that is, when $\vec{\xi}=0$. Moreover, it suffices to prove that \(u\) is a subsolution of \eqref{probhom}, since the other case is entirely analogous.

Consider a \(C^{2}\) function \(\phi\) that touches strictly \(u\) from above at a point \(x_{0} \in \mathrm{B}^{+}_{r} \cup \mathrm{T}_{r}\). Hence, $\phi(x_{0})=u(x_{0})$ and $\phi(x)>u(x)$ for any $x\in \mathrm{B}_{\tau}\setminus\{x_{0}\}$ for $\mathrm{B}_{\tau}\subset \mathrm{B}_{r}^{+}\cup \mathrm{T}_{r}$. Moreover, we can suppose without loss of generality that $\phi(x)=\frac{1}{2}(x-x_{0})^{t}\mathrm{M}(x-x_{0})+\vec{b}\cdot (x-x_{0})+c$. If \(x_{0} \in \mathrm{T}_{r}\), then
\[
\beta(x_{0}) \cdot D\phi(x_{0}) \geq g(x_{0}).
\]
Otherwise, when $x_{0}\in \mathrm{B}_{r}^{+}$, we have that
\[
\mathcal{H}(x_{0},D\phi(x_{0}))F(D^{2}\phi(x_{0}))\geq 0.
\]
We divide the analysis of this inequality into two scenarios for the test function $\phi$:
\begin{itemize}
\item[(a)] $\vec{b}\neq 0$ \\
\noindent In this case, it is clear that $\mathcal{H}(x_{0},D\phi(x_{0}))=|\vec{b}|^{p}+\mathfrak{a}(x_{0})|\vec{b}|^{q}\geq |\vec{b}|^{p}>0$. Consequently, 
\[
F(D^{2}\phi(x_{0})) \geq 0.
\] 
\item [(b)] $\vec{b}=0$\\
\noindent In the last case, arguing exactly as in \cite[Lemma 6]{IS}, one perturbs the test function by adding a term involving the projection onto the eigenspaces associated with the nonnegative eigenvalues of $\mathrm{M}$. This produces a new touching point where the gradient is nonzero, allowing us to conclude from the viscosity inequality and the ellipticity of $F$ that
\[
F(D^{2}\phi(x_{0}))=F(\mathrm{M})\geq 0.
\]
\end{itemize}
This ends the proof of the case $\vec{\xi}=0$. In the general case, defining $w(x) = u(x) - \vec{\xi}\cdot x$, we have that $w$ solves, in the viscosity sense,
\[
\left\{
\begin{array}{rclcl}
\mathcal{H}(x,Dw)F(D^2 w) &=& 0 & \mbox{in} & \mathrm{B}^{+}_{r}, \\
\beta \cdot D w &=& \tilde{g}(x) & \mbox{on} & \mathrm{T}_{r}, \\
\end{array}
\right.
\]
where $\tilde{g}(x)=g(x)-\beta(x)\cdot\vec{\xi}$, since $\beta\cdot Dw=\beta\cdot Du-\beta\cdot\vec{\xi}$ on $\mathrm{T}_{r}$. Using the cutting lemma proved without translations, which does not depend on the boundary datum, it follows that $w$ solves
\[
\left\{
\begin{array}{rclcl}
F(D^2 w) &=& 0 & \mbox{in} & \mathrm{B}^{+}_{r}, \\
\beta \cdot D w &=& \tilde{g}(x) & \mbox{on} & \mathrm{T}_{r}. \\
\end{array}
\right.
\]
Equivalently, $u$ solves \eqref{probhom}, which completes the proof.
\end{proof}


\section{\texorpdfstring{Optimal $C^{1,\nu}$ Regularity up to the Boundary: Proof of Theorem \ref{Thm1}}
{Optimal C1,nu Regularity up to the Boundary: Proof of Theorem 1.2}}\label{Sec3}

In this section, we present the proof of Theorem \ref{Thm1}. As already emphasized in the introduction, our first step is to work with the problem translated by affine profiles and to obtain an improvement of flatness.

\subsection{Universal improvement of flatness}

In this part, we prove a universal improvement of the flatness result for solutions to the translated doubly degenerate problem
\begin{equation}\label{4.1} 
\left\{
\begin{array}{rclcl}
\mathcal{H}(x,Du-\vec{\xi})F(D^2 u) &=& f(x) & \mbox{in} & \Omega_{1}, \\
\beta \cdot D u &=& g(x) & \mbox{on} & \partial\Omega_{1}.
\end{array}
\right.
\end{equation}
To this end, under suitable smallness assumptions on the data, we first show that solutions to \eqref{4.1} can be approximated by solutions of the corresponding homogeneous problem with constant coefficients and homogeneous oblique boundary condition. This is summarized in the following lemma.

\begin{lemma}[\bf Approximation Lemma]\label{approx}
Let $\vec{\xi}\in\mathbb{R}^{n}$ be a vector, and let $u$ be a normalized viscosity solution to \eqref{4.1}. Given $\varepsilon>0$, there exists $\eta=\eta(\varepsilon,n,\lambda,\Lambda,\delta_{0})$ such that, if
\begin{equation*}
\max\left\{\|f\|_{L^{\infty}(\Omega_{1})},\|g\|_{L^{\infty}(\mathrm{T}_{1})},\|\beta-\beta(0)\|_{C^{\alpha}(\mathrm{T}_{1})},\|\varphi\|_{C^{1}(\mathrm{T}_{1})}\right\}\leq \eta,
\end{equation*}
then there exists a function $\mathfrak{h}\in C^{1,\alpha_{0}}(\overline{\mathrm{B}^{+}_{\frac{3}{4}}})$ which solves
\begin{equation*} 
\left\{
\begin{array}{rclcl}
F(D^2 \mathfrak{h}) &=& 0 & \mbox{in} & \mathrm{B}^{+}_{\frac{3}{4}}, \\
\beta_{0} \cdot D \mathfrak{h} &=& 0 & \mbox{on} & \mathrm{T}_{\frac{3}{4}},
\end{array}
\right.
\end{equation*}
where $\beta_{0}=\beta(0)$, and
\begin{equation*}
\|u-\mathfrak{h}\|_{L^{\infty}(\Omega_{\frac{1}{2}})}\leq \varepsilon.
\end{equation*}
\end{lemma}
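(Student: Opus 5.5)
We argue by contradiction, following the compactness scheme standard for such approximation lemmas. Suppose the statement fails for some $\varepsilon_0>0$. Then there are sequences $\vec{\xi}_k\in\mathbb{R}^n$, operators $F_k\in\mathcal{E}(\lambda,\Lambda)$ with $F_k(0)=0$, modulating terms $\mathfrak{a}_k\geq 0$, domains with graphs $\varphi_k$, data $f_k,g_k,\beta_k$, and normalized viscosity solutions $u_k$ of \eqref{4.1} such that
\[
\max\left\{\|f_k\|_{L^\infty},\|g_k\|_{L^\infty},\|\beta_k-\beta_k(0)\|_{C^{\alpha}},\|\varphi_k\|_{C^1}\right\}\leq \tfrac1k .
\]
For these sequences we assume $\|u_k-\mathfrak{h}\|_{L^\infty(\Omega^k_{1/2})}>\varepsilon_0$ for every $\mathfrak{h}$ solving the limiting problem with $\beta_0=\beta_k(0)$. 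Since $\|\beta_k\|\leq1$, $(\beta_k)_n\geq\delta_0/2$, and $F_k$ are uniformly Lipschitz on $\mathrm{Sym}(n)$, we may extract subsequences with $\beta_k(0)\to\beta_\infty$ (still oblique), $F_k\to F_\infty$ locally uniformly, $F_\infty\in\mathcal{E}(\lambda,\Lambda)$, and $\varphi_k\to 0$ in $C^1$.

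The key compactness input is Theorem \ref{Holder}. Its bounds are independent of $\vec{\xi}_k$, so the $u_k$ are uniformly $C^{0,\alpha'}$ on $\overline{\Omega^k_{3/4}}$; one applies it on balls of radius $1/4$ centered near $\partial\Omega$ and uses interior Krylov–Safonov-type estimates on $\{|Du-\vec{\xi}|>1\}$ away from the boundary. Extending the $u_k$ continuously with the same modulus and invoking Arzelà–Ascoli, $u_k\to u_\infty$ uniformly on $\overline{\mathrm{B}^+_{3/4}}$. Since the domains flatten, $\Omega^k_{3/4}$ converges to $\mathrm{B}^+_{3/4}$ in Hausdorff distance.

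Next we identify the limit equation. We split according to the behaviour of $\vec{\xi}_k$. If $|\vec{\xi}_k|\to\infty$, then for a test function $\phi$ touching $u_\infty$ at an interior point, the perturbed touching points $x_k$ have $|D\phi(x_k)-\vec{\xi}_k|\to\infty$. Hence $\mathcal{H}_k\geq|D\phi-\vec{\xi}_k|^p\to\infty$, and dividing gives $F_k(D^2\phi(x_k))\gtrless f_k(x_k)/\mathcal{H}_k\to0$, so $F_\infty(D^2u_\infty)=0$. If instead $\vec{\xi}_k\to\vec{\xi}_\infty$, we may also take $\mathfrak{a}_k\to\mathfrak{a}_\infty$ locally uniformly, or at least bounded along the contact points after extraction. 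Stability of viscosity solutions then yields $\mathcal{H}_\infty(x,Du_\infty-\vec{\xi}_\infty)F_\infty(D^2u_\infty)=0$, and the Cutting Lemma \ref{CuttLemma} removes the degeneracy to give $F_\infty(D^2u_\infty)=0$. If $\mathfrak{a}_k$ lacks uniform bounds, we handle it as in the first case: either $\mathcal{H}_k\geq|D\phi-\vec{\xi}_\infty|^p/2>0$ when the gradient differs, or we use the Imbert–Silvestre perturbation to reach a point where it does.

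On the boundary, a test function touching $u_\infty$ at $x_0\in\mathrm{T}_{3/4}$ yields touching points $x_k$ either inside $\Omega^k$, where the interior argument applies, or on $\partial\Omega^k$, where $\beta_k(x_k)\cdot D\phi(x_k)\geq g_k(x_k)$. Since $\beta_k(x_k)\to\beta_\infty$ and $g_k\to0$, we obtain $\beta_\infty\cdot D\phi(x_0)\geq0$ in the generalized (relaxed) sense; the same argument gives the supersolution inequality. The main obstacle is this step. The boundary is curved rather than flat and the relaxed oblique condition mixes with the interior equation, so we will pass through the standard relaxed-boundary stability of Ishii/Li–Zhang. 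We then argue that for uniformly elliptic $F_\infty$ and oblique $\beta_\infty$ the generalized and classical boundary conditions coincide, as in \cite{LiZhang18}.

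Thus $u_\infty$ solves \eqref{limitprofile} with $\beta_\infty$, hence $u_\infty\in C^{1,\alpha_0}(\overline{\mathrm{B}^+_{3/4}})$ by \textbf{(A3)}. The function $\mathfrak{h}_k:=u_\infty$ is not admissible as stated, since its boundary vector is $\beta_\infty$ rather than $\beta_k(0)$. We correct this either by a linear change of variables mapping $\beta_\infty$ to $\beta_k(0)$ that preserves the class $\mathcal{E}$ up to slightly perturbed constants, or by assuming $\beta_k(0)$ is fixed from the start, since the lemma is stated for a given $\beta$. We then get $\|u_k-\mathfrak{h}_k\|_{L^\infty(\Omega^k_{1/2})}\to0$, contradicting the choice of $\varepsilon_0$.
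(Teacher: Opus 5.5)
Your compactness step and your treatment of the interior equation match the paper's. In Case I, drop the alternative ``$\mathfrak{a}_k\to\mathfrak{a}_\infty$'', since nothing bounds the $\mathfrak{a}_k$. Your fallback is exactly what the paper does: the bound $\mathcal{H}_k\geq m^p$ from $\mathfrak{a}_k\geq 0$ alone, then the Cutting Lemma with $\mathfrak{a}\equiv 0$.

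\textbf{The genuine gap is the last step.} The lemma requires $\mathfrak{h}$ to solve the problem with the \emph{given} $F$ and the \emph{given} $\beta_0=\beta(0)$, with $\eta$ depending only on $\varepsilon,n,\lambda,\Lambda,\delta_0$. So in the contradiction argument both $F_k$ and $\beta_k(0)$ vary with $k$. Uniformity in $F$ is also used later: the rescaled operators $\rho^{j(1-\nu)}F(\rho^{j(\nu-1)}\mathrm{M})$ change at every step of the iteration in Theorem~\ref{Thm1}. Your candidate $\mathfrak{h}_k:=u_\infty$ solves $F_\infty(D^2\mathfrak{h})=0$ with $\beta_\infty\cdot D\mathfrak{h}=0$. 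It is therefore inadmissible on two counts, and you only address the boundary vector.

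Neither proposed fix works. Composing with a shear $T_k$ that adjusts $\beta_\infty$ to $\beta_k(0)$ gives a solution of $F_\infty(T_k^{t}\mathrm{M}T_k)=0$. That operator is not $F_k$ and is not in general in $\mathcal{E}(\lambda,\Lambda)$, so this proves only a weaker lemma. Fixing $\beta(0)$ or $F$ from the start makes $\eta$ depend on them, contrary to the statement.

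The missing idea is to build the competitor from the $k$-th data. Let $\bar{\mathfrak{h}}_k$ be the unique solution (Li--Zhang) of $F_k(D^2\bar{\mathfrak{h}}_k)=0$ in $\mathrm{B}^+_{3/4}$, with $\beta_k(0)\cdot D\bar{\mathfrak{h}}_k=0$ on $\mathrm{T}_{3/4}$ and $\bar{\mathfrak{h}}_k=u_k$ on $\partial\mathrm{B}^+_{3/4}\setminus\mathrm{T}_{3/4}$. These are admissible by construction. By stability they converge uniformly to a solution of the limiting mixed problem with $F_\infty$, $\beta_\infty$ and Dirichlet data $u_\infty$. Since $u_\infty$ itself solves that problem, uniqueness forces $\bar{\mathfrak{h}}_k\to u_\infty$. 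Hence $\|u_k-\bar{\mathfrak{h}}_k\|_{L^\infty(\Omega^k_{1/2})}\to 0$, which is the contradiction.

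\textbf{There is also a smaller gap at the boundary.} In Case I, an interior touching point $x_k$ near $\mathrm{T}_{3/4}$ gives information only if $D\phi(x_0)\neq\vec{\xi}_\infty$. The Cutting Lemma concerns the viscosity inequalities of $u_\infty$, so it cannot be used at the approximating points $x_k$. What you actually inherit is: $\beta_\infty\cdot D\phi(x_0)\geq 0$, or $D\phi(x_0)=\vec{\xi}_\infty$, or $F_\infty(D^2\phi(x_0))\geq 0$. This is a relaxed condition for the degenerate operator, so the equivalence of relaxed and classical oblique conditions for uniformly elliptic $F_\infty$ does not apply verbatim.

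The repair is to redo the perturbation while keeping the gradient off $\vec{\xi}_\infty$. If $\beta_\infty\cdot D\phi(x_0)<0$, replace $\phi$ by $\phi+tx_n-Kx_n^2$ with $K$ large and $t>0$ small, chosen so that $D\phi(x_0)+te_n\neq\vec{\xi}_\infty$. Then all three alternatives fail. The paper carries this out directly at the level of $u_k$ on the curved domains, using the barrier $\Psi(x)=\phi(x',0)+(A-\eta)x_n-Nx_n^2$. In Case II, $|\vec{\xi}_k|\to\infty$ makes the interior alternative non-degenerate, and your argument goes through as written.
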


\begin{proof}
We proceed by a \textit{reductio ad absurdum} argument. Thus, there exist $\varepsilon_{0}>0$ and sequences of functions $F_{j}$, $f_{j}$, $u_{j}$, $\beta_{j}$, $g_{j}$, modulating coefficients $\mathfrak{a}_{j}$ satisfying {\bf (H2)}, with associated diffusion factors
\[
\mathcal{H}_{j}(x,z)=|z|^{p}+\mathfrak{a}_{j}(x)|z|^{q},
\]
vectors $\vec{\xi}_{j}$, and $C^{1}$-domains $\Omega_{j}$ such that each $u_{j}$ is a normalized viscosity solution of
\begin{equation*} 
\left\{
\begin{array}{rclcl}
\mathcal{H}_{j}(x,Du_{j}-\vec{\xi}_{j})F_{j}(D^2 u_{j}) &=& f_{j}(x) & \mbox{in} & (\Omega_{j})_{1}, \\
\beta_{j} \cdot D u_{j} &=& g_{j}(x) & \mbox{on} & \partial(\Omega_{j})_{1},
\end{array}
\right.
\end{equation*}
where the data of the problem satisfy 
\[
\max\left\{\|f_{j}\|_{L^{\infty}((\Omega_{j})_{1})},\|g_{j}\|_{L^{\infty}(\mathrm{T}_{1})},\|\beta_{j}-\beta_{j}(0)\|_{C^{0,\alpha}(\mathrm{T}_{1})},\|\varphi_{\Omega_{j}}\|_{C^{1}(\mathrm{T}_{1})}\right\}\leq \frac{1}{j},
\]
We stress that no smallness, boundedness or equicontinuity assumption is placed on the coefficients $\mathfrak{a}_{j}$, nor any bound on the upper exponent $q$, since the parameter $\eta$ is required to be independent of the double phase coefficient. In particular, the sequence $(\mathfrak{a}_{j})_{j}$ need not possess a common modulus of continuity, and the argument below is designed so that no convergence of the diffusion factors $\mathcal{H}_{j}$ is ever used.
However, for any $\mathfrak{h}_{j}$ viscosity solution to
\begin{equation*} 
\left\{
\begin{array}{rclcl}
F_{j}(D^2 \mathfrak{h}_{j}) &=& 0 & \mbox{in} & \mathrm{B}^{+}_{\frac{3}{4}}, \\
(\beta_{j})_{0} \cdot D \mathfrak{h}_{j} &=& 0 & \mbox{on} & \mathrm{T}_{\frac{3}{4}},
\end{array}
\right.
\end{equation*}
we have
\begin{equation}\label{4.2}
\|u_{j}-\mathfrak{h}_{j}\|_{L^{\infty}((\Omega_{j})_{\frac{1}{2}})}>\varepsilon_{0}.
\end{equation}
By the assumptions above, we may apply the boundary Hölder regularity result (Theorem~\ref{Holder}) and conclude that the family $\{u_{j}\}$ is equicontinuous and uniformly bounded. Therefore, by the Arzelà--Ascoli theorem, up to the extraction of a subsequence, the sequence $\{u_{j}\}$ converges locally uniformly to a function $u_{\infty}\in C^{0}\!\left(\overline{\mathrm{B}^{+}_{\frac{4}{5}}}\right)$. 
A similar compactness argument applies to the coefficients and the domains. Using condition \textbf{(A1)} together with the uniform bounds above, and again passing to subsequences if necessary, we obtain the convergences $F_{j}\to F_{\infty}\in \mathcal{E}(\lambda,\Lambda)$ (locally uniformly on $\mathrm{Sym}(n)$), $f_{j}\to 0$, $g_{j}\to 0$, $\beta_{j}\to (\beta_{\infty})_{0}$ and $\Omega_{j}\to \mathrm{B}^{+}_{\frac{4}{5}}$ (i.e., $\varphi_{\Omega_{j}}\to 0$ on $\mathrm{T}_{\frac{4}{5}}$), where $(\beta_{\infty})_{0}$ is a constant oblique vector field. No convergence is asserted for the sequence $(\mathfrak{a}_{j})_{j}$, and consequently none for $(\mathcal{H}_{j})_{j}$.

We claim that $u_{\infty}$ is a viscosity solution of
\begin{equation}\label{4.3}
\left\{
\begin{array}{rclcl}
 F_{\infty}(D^2 u_{\infty}) &=& 0 & \mbox{in} & \mathrm{B}^{+}_{\frac{4}{5}}, \\
(\beta_{\infty})_{0} \cdot D u_{\infty} &=& 0 & \mbox{on} & \mathrm{T}_{\frac{4}{5}}.
    \end{array}
    \right.
\end{equation}
To prove this, we split the analysis into two cases, according to the behavior of the vector sequence $(\vec{\xi}_{j})$:\\
\textbf{Case I:} $(\vec{\xi}_{j})_{j \in \mathbb{N}}$ is a bounded sequence.\\
We can extract a convergent subsequence of $\vec{\xi}_j$, which we still denote by $\vec{\xi}_{j}$, such that $\vec{\xi}_{j} \to \vec{\xi}_{\infty}$. Since the coefficients $\mathfrak{a}_{j}$ share no modulus of continuity, we cannot pass to the limit in the diffusion factors $\mathcal{H}_{j}$. We therefore avoid doing so altogether: we first show that $u_{\infty}$ is a viscosity solution of the one-phase degenerate equation
\begin{equation}\label{4.3a}
|Du_{\infty}-\vec{\xi}_{\infty}|^{p}\,F_{\infty}(D^{2}u_{\infty})=0 \qquad\text{in }\ \mathrm{B}^{+}_{\frac{4}{5}},
\end{equation}
whose diffusion factor involves neither $\mathfrak{a}_{j}$ nor $q$, and we then remove the degeneracy by means of the cutting lemma.

Let $x_{0}\in \mathrm{B}^{+}_{\frac{4}{5}}$ and let $\phi\in C^{2}(\mathrm{B}^{+}_{\frac{4}{5}})$ be such that $u_{\infty}-\phi$ attains a local maximum at $x_{0}$. Replacing $\phi$ by
\[
x\mapsto \phi(x)-\kappa|x-x_{0}|^{2},\qquad \kappa>0,
\]
and letting $\kappa\searrow 0$ at the end, we may assume that the touching is strict. If $D\phi(x_{0})=\vec{\xi}_{\infty}$, the left-hand side of \eqref{4.3a} vanishes at $x_{0}$ and the subsolution inequality holds trivially. We may therefore assume that
\[
2m:=|D\phi(x_{0})-\vec{\xi}_{\infty}|>0.
\]
Choose $r>0$ such that $\overline{\mathrm{B}_{r}(x_{0})}\subset \mathrm{B}^{+}_{\frac{4}{5}}$; since $\Omega_{j}\to \mathrm{B}^{+}_{\frac{4}{5}}$, we have $\mathrm{B}_{r}(x_{0})\subset (\Omega_{j})_{1}$ for all $j$ sufficiently large. By the standard stability of strict touching points under the local uniform convergence $u_{j}\to u_{\infty}$, there exist points $x_{j}\in \mathrm{B}_{r}(x_{0})$ with $x_{j}\to x_{0}$ such that $u_{j}-\phi$ attains a local maximum at $x_{j}$. Hence, by Definition~\ref{def:viscosity_sol},
\[
\mathcal{H}_{j}(x_{j},D\phi(x_{j})-\vec{\xi}_{j})\,F_{j}(D^{2}\phi(x_{j}))\geq f_{j}(x_{j}).
\]
Since $D\phi(x_{j})\to D\phi(x_{0})$ and $\vec{\xi}_{j}\to \vec{\xi}_{\infty}$, we have $|D\phi(x_{j})-\vec{\xi}_{j}|\geq m$ for all $j$ sufficiently large. Therefore, using only the nonnegativity of $\mathfrak{a}_{j}$ granted by {\bf (H2)},
\[
\mathcal{H}_{j}(x_{j},D\phi(x_{j})-\vec{\xi}_{j})
=|D\phi(x_{j})-\vec{\xi}_{j}|^{p}+\mathfrak{a}_{j}(x_{j})|D\phi(x_{j})-\vec{\xi}_{j}|^{q}
\geq |D\phi(x_{j})-\vec{\xi}_{j}|^{p}\geq m^{p}>0 .
\]
The lower bound $m^{p}$ depends only on $\phi$, $\vec{\xi}_{\infty}$ and $p$; it is independent of $j$, of $\mathfrak{a}_{j}$ and of $q$. Consequently,
\[
F_{j}(D^{2}\phi(x_{j}))\geq \frac{f_{j}(x_{j})}{\mathcal{H}_{j}(x_{j},D\phi(x_{j})-\vec{\xi}_{j})}\geq -\,\frac{\|f_{j}\|_{L^{\infty}((\Omega_{j})_{1})}}{m^{p}}.
\]
Letting $j\to\infty$, and recalling that $f_{j}\to 0$ uniformly, that $F_{j}\to F_{\infty}$ locally uniformly and that $D^{2}\phi(x_{j})\to D^{2}\phi(x_{0})$, we obtain $F_{\infty}(D^{2}\phi(x_{0}))\geq 0$, and thus
\[
|D\phi(x_{0})-\vec{\xi}_{\infty}|^{p}F_{\infty}(D^{2}\phi(x_{0}))\geq 0 .
\]
The supersolution inequality is obtained in exactly the same manner: if $\phi$ touches $u_{\infty}$ from below at $x_{0}$ with $D\phi(x_{0})\neq \vec{\xi}_{\infty}$, then $u_{j}-\phi$ attains a local minimum at suitable points $x_{j}\to x_{0}$, and the same lower bound $\mathcal{H}_{j}\geq m^{p}$ yields
\[
F_{j}(D^{2}\phi(x_{j}))\leq \frac{\|f_{j}\|_{L^{\infty}((\Omega_{j})_{1})}}{m^{p}}\longrightarrow 0 .
\]
This proves \eqref{4.3a}.

Moreover, since $\varphi_{\Omega_{j}}\to 0$, $\beta_{j}(x',\varphi_{\Omega_{j}}(x'))\to (\beta_{\infty})_{0}$ and $g_{j}(x',\varphi_{\Omega_{j}}(x'))\to 0$ on $\mathrm{T}_{\frac{4}{5}}$, we have
\[
(\beta_{\infty})_{0}\cdot Du_{\infty}=0\qquad\text{on }\ \mathrm{T}_{\frac{4}{5}} .
\]

Concretely, test functions and touching points are transferred as follows. Let
$y_{0}\in\mathrm{T}_{\frac{4}{5}}$, taken to be the origin, and let $\phi\in C^{2}$ be such that
$u_{\infty}-\phi$ attains a local maximum at $0$ relative to
$\overline{\mathrm{B}^{+}_{\frac{4}{5}}}$; replacing $\phi$ by $\phi+|x|^{2}$ we may assume
$u_{\infty}(0)=\phi(0)$ and $u_{\infty}-\phi\leq-|x|^{2}$ nearby. We must show
$(\beta_{\infty})_{0}\cdot D\phi(0)\geq0$. Suppose not. Then we have
\[
3\eta:=A-\phi_{n}(0)>0\quad \text{for}\quad A:=-\frac{\beta_{\infty}'(0)\cdot D'\phi(0)}{(\beta_{\infty})_{n}(0)},
\]
which makes sense since $(\beta_{\infty})_{n}(0)\geq\frac{\delta_{0}}{2}$. Now fix $\tau>0$ such that
$\phi_{n}(x',0)\leq A-2\eta$ whenever $|x'|<\tau$. Furthermore, since the map $\eta\mapsto(D'\phi(0),A-\eta)$ is injective, we can in addition impose
$2m:=\big|(D'\phi(0),A-\eta)-\vec{\xi}_{\infty}\big|>0$. With these preliminary observations, consider the following auxiliary function.
\[
\Psi(x):=\phi(x',0)+(A-\eta)x_{n}-Nx_{n}^{2},
\qquad N:=\frac{n\Lambda\|D^{2}\phi\|_{L^{\infty}}+1}{2\lambda},
\]
which involves neither $\varphi_{\Omega_{j}}$ nor $\mathfrak{a}_{j}$ nor $q$. Expanding $\phi$ in
$x_{n}$ about $(x',0)$ gives
$$
\Psi-\phi\geq\eta x_{n}-\big(N+\frac{1}{2}\|D^{2}\phi\|_{L^{\infty}}\big)x_{n}^{2}\geq0 \quad \text{for}
\quad 0\leq x_{n}\leq\tau_{N}:=\eta\big/\big(N+\frac{1}{2}\|D^{2}\phi\|_{L^{\infty}}\big),
$$
and
$$
\Psi-\phi\geq- \frac{c_{N}}{j}\quad \text{on the layer} \ \, -\frac{1}{j}\leq x_{n}<0,$$
with $c_{N}$ independent of $j$. Fix $\rho\leq\min\{\tau_{N},\,m/(2\|D^{2}\phi\|_{L^{\infty}}+4N)\}$. Since $u_{j}\to u_{\infty}$
uniformly and by Theorem~\ref{Holder} the functions $u_{j}$ are uniformly $C^{0,\theta}$ (on the layer
$\varphi_{\Omega_{j}}(x')\leq x_{n}<0$, where $u_{\infty}$ is not defined, one compares instead with $(x',0)\in(\Omega_{j})_{1}$, at the cost of an extra $O(j^{-\theta})$), we get 
$$
\Psi-u_{j}\geq|x|^{2}-o(1) \quad \text{on}\quad \overline{(\Omega_{j})_{1}}\cap\overline{\mathrm{B}_{\rho}}
$$
while $\Psi(0)-u_{j}(0)\to0$. Thus, for $j$ large, $\Psi$ minus a constant touches $u_{j}$ from above at some $\tilde{x}_{j}$ with $|\tilde{x}_{j}|<\rho$ and $\tilde{x}_{j}\to0$.

If $\tilde{x}_{j}\in(\Omega_{j})_{1}$, the choice of $\rho$ gives
$|D\Psi(\tilde{x}_{j})-\vec{\xi}_{j}|\geq m$, hence $\mathcal{H}_{j}\geq m^{p}$ by {\bf (H2)},
whereas $F_{j}(0)=0$ and the subadditivity of $\mathcal{P}^{+}_{\lambda,\Lambda}$ give
\[
F_{j}(D^{2}\Psi)\leq\mathcal{P}^{+}_{\lambda,\Lambda}\big(D^{2}[\phi(x',0)]\big)
+\mathcal{P}^{+}_{\lambda,\Lambda}\big(-2Ne_{n}\otimes e_{n}\big)
\leq n\Lambda\|D^{2}\phi\|_{L^{\infty}}-2\lambda N\leq-1 ,
\]
so that $\mathcal{H}_{j}F_{j}(D^{2}\Psi)\leq-m^{p}<-\frac{1}{j}\leq f_{j}(\tilde{x}_{j})$,
contradicting Definition~\ref{def:viscosity_sol}. If instead
$\tilde{x}_{j}\in\partial(\Omega_{j})_{1}$, then
$|\tilde{x}_{j,n}|=|\varphi_{\Omega_{j}}(\tilde{x}_{j}')|\leq\frac{1}{j}$ and, using
$\beta_{\infty}'(0)\cdot D'\phi(0)+(\beta_{\infty})_{n}(0)A=0$,
$\beta_{j}\to(\beta_{\infty})_{0}$, $\tilde{x}_{j}\to0$ and
$\frac{\delta_{0}}{2}\leq(\beta_{j})_{n}\leq1$,
\[
\beta_{j}(\tilde{x}_{j})\cdot D\Psi(\tilde{x}_{j})
\leq o(1)-(\beta_{j})_{n}\eta+\frac{2N}{j}\leq-\frac{\delta_{0}\eta}{4}
<-\frac{1}{j}\leq g_{j}(\tilde{x}_{j})
\]
for $j$ large, again contradicting Definition~\ref{def:viscosity_sol}. Hence
$(\beta_{\infty})_{0}\cdot D\phi(0)\geq0$, and the supersolution inequality follows with minima in
place of maxima.

We are now in a position to apply the cutting lemma (Lemma~\ref{CuttLemma}) to $u_{\infty}$, in the particular case $\mathfrak{a}\equiv 0$, with $\vec{\xi}=\vec{\xi}_{\infty}$, $g\equiv 0$ and $r=\frac{4}{5}$: it is precisely at the degenerate touching points, where $D\phi(x_{0})=\vec{\xi}_{\infty}$, that the perturbation argument of Lemma~\ref{CuttLemma} is invoked. We conclude that $u_{\infty}$ solves \eqref{4.3}.\\
\textbf{Case II:} $(\vec{\xi}_{j})_{j \in \mathbb{N}}$ is an unbounded sequence.\\
In this case, we can extract a subsequence, still denoted by $(\vec{\xi}_{j})_{j \in \mathbb{N}}$, such that $0 < |\vec{\xi}_{j}| \to +\infty$.

We claim that
$$
F_{\infty}(D^2u_{\infty})=0 \quad \text{in}\quad \mathrm{B}_\frac{4}{5}^+.
$$
To verify this, let $x_0\in \mathrm{B}_\frac{4}{5}^+$ and let $\varphi\in C^2(\mathrm{B}_\frac{4}{5}^+)$ be such that $u_{\infty}-\varphi$ attains a local maximum at $x_0$. Up to replacing $\varphi$ by
$$
x \mapsto \varphi(x)-\kappa |x-x_0|^2, \qquad \kappa>0,
$$
and letting $\kappa \searrow 0$ at the end, we may assume that the touching is strict. Choose $r>0$ so that $\overline{\mathrm{B}_r(x_0)} \subset \mathrm{B}_\frac{4}{5}^+$. Since $\Omega_j\to \mathrm{B}_\frac{4}{5}^+$, we have
$$
\mathrm{B}_r(x_0) \subset (\Omega_j)_1
$$
for all sufficiently large $j$. By the standard stability of touching points and the local uniform convergence $u_j \to u_{\infty}$, there exist points $x_j \in \mathrm{B}_r(x_0)$ such that $x_j\to x_0$ and $u_j-\varphi$ attains a local maximum at $x_j$. Hence, by Definition~\ref{def:viscosity_sol},
$$
\mathcal{H}_{j}(x_j,D\varphi(x_j)-\vec{\xi}_j)\,F_j(D^2\varphi(x_j)) \ge f_j(x_j).
$$
Since $D\varphi$ is bounded on $\mathrm{B}_r(x_0)$ and $|\vec{\xi}_j| \to \infty$, for all sufficiently large $j$,
$$
|D\varphi(x_j)-\vec{\xi}_j| \ge |\vec{\xi}_j|-|D\varphi(x_j)| \ge \frac{1}{2} |\vec{\xi}_j|.
$$
Using only the nonnegativity of $\mathfrak{a}_{j}$, we obtain
$$
\mathcal{H}_{j}(x_j,D\varphi(x_j)-\vec{\xi}_j) = |D\varphi(x_j)-\vec{\xi}_j|^p + \mathfrak{a}_{j}(x_j)|D\varphi(x_j)-\vec{\xi}_j|^q \ge |D\varphi(x_j)-\vec{\xi}_j|^p \ge 2^{-p}|\vec\xi_j|^p.
$$
Therefore,
$$
F_j(D^2\varphi(x_j)) \ge \frac{f_j(x_j)}{\mathcal{H}_{j}(x_j,D\varphi(x_j)-\vec{\xi}_j)} \ge -\,2^p\frac{\|f_j\|_{L^{\infty}((\Omega_j)_1)}}{|\vec{\xi}_j|^p}.
$$
Passing to the limit as $j \to \infty$, and recalling that $f_j\to 0$ uniformly and $F_j\to F_{\infty}$, we infer that
$$
F_{\infty}(D^2\varphi(x_0))\ge 0.
$$
Thus, $u_{\infty}$ is a viscosity subsolution of
$$
F_{\infty}(D^2u_{\infty})=0 \qquad \text{in } \mathrm{B}_1^+.
$$
The supersolution property is obtained analogously. Indeed, if $\varphi$ touches $u_{\infty}$ from
below at $x_0$, then for suitable points $x_j \to x_0$ the function $u_j-\varphi$ attains a local
minimum at $x_j$, and hence
$$
\mathcal{H}_{j}(x_j,D\varphi(x_j)-\vec{\xi}_j)\,F_j(D^2\varphi(x_j)) \le f_j(x_j).
$$
Using again
$$
\mathcal{H}_{j}(x_j,D\varphi(x_j)-\vec{\xi}_j)\ge 2^{-p}|\vec{\xi}_j|^p,
$$
we get
$$
F_j(D^2\varphi(x_j)) \le 2^p\frac{\|f_j\|_{L^{\infty}((\Omega_j)_1)}}{|\vec{\xi}_j|^p},
$$
and passing to the limit yields
$$
F_{\infty}(D^2\varphi(x_0))\le 0.
$$
Therefore,
$$
F_{\infty}(D^2u_{\infty})=0 \quad \text{in}\quad \mathrm{B}_\frac{4}{5}^+.
$$
Again, since we have the uniform convergence $\varphi_{\Omega_{j}}\to 0$, $\beta_j\to(\beta_{\infty})_0$,  and $g_j \to 0$ on $\mathrm{T}_\frac{4}{5}$, it follows that
$$
(\beta_{\infty})_0 \cdot Du_{\infty}=0 \quad \text{on}\quad \mathrm{T}_\frac{4}{5}.
$$
Hence, $u_{\infty}$ solves \eqref{4.3}.

Now, for each $j\in\mathbb{N}$, let $\bar{\mathfrak{h}}_{j}$ denote the unique solution of
\[
\left\{
\begin{array}{rclcl}
F_{j}(D^2 \bar{\mathfrak{h}}_{j}) &=& 0 & \mbox{in} & \mathrm{B}^{+}_{\frac{3}{4}}, \\
(\beta_{j})_{0} \cdot D \bar{\mathfrak{h}}_{j} &=& 0 & \mbox{on} & \mathrm{T}_{\frac{3}{4}},\\
\bar{\mathfrak{h}}_{j} &=& u_{j}(x) & \mbox{on} & \partial\mathrm{B}^{+}_{\frac{3}{4}}\setminus \mathrm{T}_{\frac{3}{4}},
\end{array}
\right.
\]
whose existence and uniqueness are guaranteed by \cite[Theorem~3.3]{LiZhang18}. By the stability of viscosity solutions (see \cite[Theorem 3.8]{CCKS} and \cite[Proposition 2.1]{LiZhang18}; here the domain $\mathrm{B}^{+}_{\frac{3}{4}}$ and its flat boundary $\mathrm{T}_{\frac{3}{4}}$ are fixed, and the $(\beta_{j})_{0}$ are constant vectors converging to $(\beta_{\infty})_{0}$, so that the boundary inequality passes to the limit at once), the sequence $\{\bar{\mathfrak{h}}_{j}\}$ converges locally uniformly to a function $\bar{\mathfrak{h}}_{\infty}$, which is a viscosity solution of
\[
\left\{
\begin{array}{rclcl}
F_{\infty}(D^2 \bar{\mathfrak{h}}_{\infty}) &=& 0 & \mbox{in} & \mathrm{B}^{+}_{\frac{3}{4}}, \\
(\beta_{\infty})_{0} \cdot D \bar{\mathfrak{h}}_{\infty} &=& 0 & \mbox{on} & \mathrm{T}_{\frac{3}{4}},\\
\bar{\mathfrak{h}}_{\infty} &=& u_{\infty}(x) & \mbox{on} & \partial\mathrm{B}^{+}_{\frac{3}{4}}\setminus \mathrm{T}_{\frac{3}{4}}.
\end{array}
\right.
\]

Since this boundary value problem admits a unique solution (again by \cite[Theorem~3.3]{LiZhang18}), it follows that $u_{\infty} = \bar{\mathfrak{h}}_{\infty}$.
This contradicts \eqref{4.2} for all $j$ sufficiently large, which completes the argument.
\end{proof}
Within this approximation framework, we prove that solutions of the translated problem \eqref{4.1} admit affine approximations with decay of order $1+\bar{\alpha}$, for any $\bar{\alpha}\in(0,\alpha_{0})$. This discrete affine approximation is formulated in the following lemma.

\begin{lemma}[\bf Improvement of flatness]\label{Lemma3.2}
Let $\vec{\xi}\in\mathbb{R}^{n}$ and let $u$ be a normalized viscosity solution to \eqref{4.1}. Given $\bar{\alpha}\in (0,\alpha_{0})$, there exist constants $\eta>0$ and $0<\rho\leq\frac{1}{2}$ depending only on $n$, $\lambda$, $\Lambda$, $\delta_{0}$, $\alpha_{0}$ and $\bar{\alpha}$ such that, if
\begin{equation*}
\max\left\{\|f\|_{L^{\infty}(\Omega_{1})},\|g\|_{L^{\infty}(\mathrm{T}_{1})},
\|\beta-\beta_{0}\|_{C^{\alpha}(\mathrm{T}_{1})},
\|\varphi\|_{C^{1}(\mathrm{T}_{1})}\right\}\leq \eta,
\end{equation*}
then there exists an affine function $\ell(x)=\vec{a}\cdot x+ b$ satisfying:
\begin{itemize}
\item[-] $|\vec{a}|+|b|\leq \mathrm{C}_{1}$, where $\mathrm{C}_{1}>0$ depends only on $n$, $\lambda$, $\Lambda$ and $\delta_{0}$; 
\item[-] $\beta_{0}\cdot \vec{a}=0$;
\item[-] $\|u-\ell\|_{L^{\infty}(\Omega_{\rho})}\leq \rho^{1+\bar{\alpha}}$, 
\end{itemize}
where $b=u(0)$.
\end{lemma}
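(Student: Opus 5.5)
We argue by compactness, combining the Approximation Lemma~\ref{approx} with the $C^{1,\alpha_0}$ estimate for the limiting profile in \textbf{(A3)}. The plan is to fix $\rho$ first, using only universal data and the gap $\alpha_0-\bar{\alpha}>0$, and then choose $\varepsilon$, and hence $\eta$, in terms of $\rho$.

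Let $\mathfrak{h}$ be the profile given by Lemma~\ref{approx} for an $\varepsilon$ to be fixed. Since $\|u\|_{L^\infty}\le 1$, we have $\|\mathfrak{h}\|_{L^\infty(\mathrm{B}^+_{3/4})}\le 1+\varepsilon\le 2$. Rescaling \textbf{(A3)} from $\mathrm{B}^+_{3/4}$ to a smaller half-ball gives
\[
\|\mathfrak{h}\|_{C^{1,\alpha_0}(\overline{\mathrm{B}^+_{1/2}})}\le 2\mathrm{C}_0'
\]
with $\mathrm{C}_0'$ universal. Set
\[
\tilde\ell(x)=\mathfrak{h}(0)+D\mathfrak{h}(0)\cdot x .
\]
The boundary condition $\beta_0\cdot D\mathfrak{h}=0$ holds on $\mathrm{T}_{3/4}$ classically, because $\mathfrak{h}\in C^1$ up to the flat boundary. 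Hence $\beta_0\cdot D\mathfrak{h}(0)=0$, and $|D\mathfrak{h}(0)|+|\mathfrak{h}(0)|\le 2\mathrm{C}_0'$. Taylor's bound gives $|\mathfrak{h}(x)-\tilde\ell(x)|\le 2\mathrm{C}_0'|x|^{1+\alpha_0}$ on $\overline{\mathrm{B}^+_{1/2}}$.

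Two technical points need care. First, $\Omega_\rho$ is not contained in $\mathrm{B}^+_{1/2}$, since $\varphi$ may be negative. I would extend $\mathfrak{h}$ slightly below $\mathrm{T}$ as a $C^{1,\alpha_0}$ function, for instance by the first order Taylor extension in $x_n$, so that the same Taylor bound holds on $\mathrm{B}_{1/2}\cap\{x_n\ge -\|\varphi\|_{C^1}\}$. Alternatively, I would note that Lemma~\ref{approx} compares $u$ and $\mathfrak{h}$ on $\Omega_{1/2}$, so $\mathfrak{h}$ is understood there in exactly this extended sense. Second, the affine function must satisfy $b=u(0)$. So I take
\[
\ell(x)=D\mathfrak{h}(0)\cdot x+u(0),
\]
which changes the constant term by $|u(0)-\mathfrak{h}(0)|\le\varepsilon$.

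For $x\in\Omega_\rho$ this gives
\[
|u-\ell|\le |u-\mathfrak{h}|+|\mathfrak{h}-\tilde\ell|+|\tilde\ell-\ell|\le 2\varepsilon+2\mathrm{C}_0'\rho^{1+\alpha_0}.
\]
Choose $\rho\le\frac12$ so that $2\mathrm{C}_0'\rho^{1+\alpha_0}\le\frac12\rho^{1+\bar{\alpha}}$, that is, $\rho^{\alpha_0-\bar{\alpha}}\le 1/(4\mathrm{C}_0')$. This is possible exactly because $\bar{\alpha}<\alpha_0$, and it makes $\rho$ depend only on $n,\lambda,\Lambda,\delta_0,\alpha_0,\bar{\alpha}$. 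Then set $\varepsilon=\frac14\rho^{1+\bar{\alpha}}$ and let $\eta=\eta(\varepsilon)$ be the constant of Lemma~\ref{approx}. With these choices $\|u-\ell\|_{L^\infty(\Omega_\rho)}\le\rho^{1+\bar{\alpha}}$. The remaining requirements are already in place: $\beta_0\cdot\vec a=0$, and $|\vec a|+|b|\le 2\mathrm{C}_0'+1=:\mathrm{C}_1$.

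The main obstacle is the mismatch between the region $\Omega_\rho$, where $u$ lives, and the flat half-ball where $\mathfrak{h}$ is a solution. Handling it requires the extension of $\mathfrak{h}$ across $\mathrm{T}$ described above, together with the smallness of $\|\varphi\|_{C^1}\le\eta$. That smallness makes the extension error $O(\eta^{1+\alpha_0})$, which is absorbed into $\varepsilon$. A second point to verify is that $\mathrm{C}_0$ in \textbf{(A3)} is uniform over all $F\in\mathcal{E}(\lambda,\Lambda)$ and all constant $\beta_0$ with $(\beta_0)_n\ge\delta_0/2$. This uniformity is what makes $\rho$ and $\mathrm{C}_1$ universal.
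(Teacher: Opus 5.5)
Your proposal is correct and is essentially the paper's argument. You take $\mathfrak{h}$ from Lemma~\ref{approx}, set $\ell(x)=D\mathfrak{h}(0)\cdot x+u(0)$, and bound $|u-\ell|\le 2\varepsilon+\mathrm{C}\rho^{1+\alpha_0}$ using the $C^{1,\alpha_0}$ estimate and $|u(0)-\mathfrak{h}(0)|\le\varepsilon$. You then fix $\rho$ from the gap $\alpha_0-\bar{\alpha}$ before choosing $\varepsilon$ and then $\eta$; the paper does the same with the factor $\tfrac13$ where you use $\tfrac14$. Your remarks on $\Omega_\rho\not\subset\mathrm{B}^+_{1/2}$ and on the uniformity of $\mathrm{C}_0$ make explicit what the paper leaves tacit.

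One small correction: $\|\mathfrak{h}\|_{L^\infty(\mathrm{B}^+_{3/4})}\le 1+\varepsilon$ does not follow from closeness on $\Omega_{1/2}$ alone. The universal $L^\infty$ bound on $\mathfrak{h}$, which the paper also uses tacitly, should instead come from the proof of Lemma~\ref{approx}: there the comparison profile has boundary values $u_j$ on the curved part, so $|\mathfrak{h}|\le 1$ by comparison with constants.
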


\begin{proof}
Fix $\varepsilon > 0$, to be determined \textit{a posteriori}, and let $\mathfrak{h}$ be a solution to the corresponding homogeneous problem
\begin{equation*} 
\left\{
\begin{array}{rclcl}
F(D^2 \mathfrak{h}) &=& 0 & \mbox{in} & \mathrm{B}^{+}_{\frac{3}{4}}, \\
\beta_{0} \cdot D \mathfrak{h} &=& 0 & \mbox{on} & \mathrm{T}_{\frac{3}{4}},
\end{array}
\right.
\end{equation*}
by the Approximation Lemma \ref{approx}. The function $\mathfrak{h}$ is $\varepsilon$-close to $u$ in $L^{\infty}$-norm, that is,
\begin{equation}\label{4.4}
\|u-\mathfrak{h}\|_{L^{\infty}(\Omega_{\frac{1}{2}})} \leq \varepsilon.
\end{equation}
According to the regularity theory (see \cite[Theorem 1.2]{LiZhang18}), we have 
$\mathfrak{h} \in C^{1,\alpha_{0}}(\overline{\mathrm{B}^{+}_{\frac{2}{3}}})$, together with the estimate
$$
\|\mathfrak{h}\|_{C^{1,\alpha_{0}}(\overline{\mathrm{B}^{+}_{\frac{2}{3}}})} \leq \mathrm{C}_{0}(n,\lambda,\Lambda,\delta_{0}).
$$  
Now we define the affine profile as follows: consider $\vec{a} = D\mathfrak{h}(0)$ and $b= u(0)$. By the $C^{1,\alpha_{0}}$-estimate for $\mathfrak{h}$, we have $|\vec{a}|\leq \mathrm{C}_{0}$. 

On the other hand, it follows from the previous estimate for $\mathfrak{h}$ that
\begin{equation}\label{4.5}
\|\mathfrak{h} - \tilde{\ell}\|_{L^{\infty}(\Omega_{r})} \leq \mathrm{C}_{0} r^{1+\alpha_{0}}, \quad \forall r \in (0, 2/3),
\end{equation}
where $\tilde{\ell}(x)=\vec{a}\cdot x+\mathfrak{h}(0)$. Now we select $\rho$ and $\varepsilon$ by setting
\begin{equation*}
\rho := \min\left\{\frac{1}{2},\left(\frac{1}{3\mathrm{C}_{0}}\right)^{\frac{1}{\alpha_{0} - \bar{\alpha}}}\right\}, \qquad \varepsilon := \frac{1}{3}\rho^{1+\bar{\alpha}}.  
\end{equation*}
With these choices, the constant $\eta>0$ is well defined and, by \eqref{4.4}–\eqref{4.5}, it follows that
\begin{eqnarray*}
\|u-\ell\|_{L^{\infty}(\Omega_{\rho})} &\leq& \|u-\mathfrak{h}\|_{L^{\infty}(\Omega_{\rho})}
+\|\mathfrak{h}-\tilde{\ell}\|_{L^{\infty}(\Omega_{\rho})}
+|u(0)-\mathfrak{h}(0)| \\
&\leq& 2\varepsilon + \mathrm{C}_{0} \rho^{1+\alpha_{0}}
\leq \rho^{1+\bar{\alpha}}.
\end{eqnarray*}
The estimate for $|b|$ follows from the normalization of $u$, consequently 
\[
|\vec{a}|+|b|\leq \mathrm{C}_{1},
\]
where $\mathrm{C}_{1}=\mathrm{C}_{0}+1$. Finally, the identity $\beta_{0}\cdot \vec{a}=0$ follows from the fact that $\mathfrak{h}$ satisfies the homogeneous equation with constant oblique boundary condition.
\end{proof}

\subsection{Proof of the Main Theorem}

We are now in a position to present the proof of Theorem~\ref{Thm1}.
\begin{proof}[\bf Proof of Theorem \ref{Thm1}]
We first treat the case without the absorption term in the boundary condition, that is, when $\gamma=0$. To this end, we may, without loss of generality, suppose that $u(0)=g(0)=0$ and that $u$ is a normalized viscosity solution of
\begin{equation*} 
\left\{
\begin{array}{rclcl}
\mathcal{H}(x,Du-\vec{\xi})F(D^2u) &=& f(x) & \mbox{in} & \Omega_{1}, \\
\beta \cdot D u &=& g(x) & \mbox{on} & \partial \Omega_{1},
\end{array}
\right.
\end{equation*}
for some $\vec{\xi}\in\mathbb{R}^{n}$ with
\[
\|f\|_{L^\infty(\Omega_1)}\le \eta,
\qquad
\|g\|_{C^\alpha(\partial\Omega_1)}\le \frac{\eta}{2},
\]
\[
\|\beta-\beta_0\|_{C^\alpha(\partial\Omega_1)}
\le \frac{1-\rho^{\nu}}{2\mathrm{C}_{1}}\,\eta,
\qquad
\varphi(0)=0,\quad D\varphi(0)=0,\quad
\|\varphi\|_{C^1(\mathrm{T}_1)}\le \eta,
\]
where $0<\rho\leq \frac{1}{2}$, $\eta>0$ and $\mathrm{C}_{1}>0$ are the constants provided by Lemma~\ref{Lemma3.2}. Indeed, to this end, let us consider the rescaled profile $\tilde{v}(x)=\frac{u(rx)}{K}$, where \(K>1\) and \(r\in(0,1)\) are constants to be chosen later.  In this case, \(\tilde{v}\) satisfies in the viscosity sense
\[
\begin{cases}
\tilde{\mathcal{H}}(x,D\tilde{v})\tilde{F}(D^{2}\tilde{v})=\tilde{f}(x)
& \text{in } \Omega_{1},
\\
\tilde{\beta}(x)\cdot D\tilde{v}=\tilde{g}(x)
& \text{on } \partial\Omega_{1},
\end{cases}
\]
where
\begin{eqnarray*}
\left\{
\begin{array}{lll}
\tilde{F}(\mathrm{M}) &:=&  \frac{r^{2}}{K}F\!\left(\frac{K}{r^{2}} \mathrm{M}\right), \\
\tilde{f}(x)&:=&\frac{r^{p+2}}{K^{p+1}}f(rx), \\
\tilde{\mathcal{H}}(x,\vec{\zeta}) &:=& |\vec{\zeta}|^{p}+\tilde{\mathfrak{a}}(x)|\vec{\zeta}|^{q},\\
\tilde{\mathfrak{a}}(x) &:=& \left(\frac{K}{r}\right)^{q-p}\mathfrak{a}(rx), \\
\tilde{\beta}(x) &:=& \beta(rx), \\
\tilde{g}(x) &:=&\frac{r}{K}g(rx).
\end{array}
\right.
\end{eqnarray*}
Now, by choosing a suitable coordinate system, we may assume that \(\Omega_{1}\) can be represented as the graph of a \(C^{1}\)-function \(\varphi_{\Omega}\) (since \(\Omega \in C^{1}\)) satisfying
\[
\varphi_{\Omega}(0)=|D\varphi_{\Omega}(0)|=0.
\]
Since \(\varphi_{\Omega}\in C^{1}\), there exists \(r_{0}\ll1\), depending only on the \(C^{1}\)-modulus of continuity of \(\varphi_{\Omega}\), such that
\[
|D\varphi_{\Omega}(x)|<\eta, \quad \text{for any}\quad |x|<r_{0}
\]
Set $\tilde{\varphi}(x)=\frac{1}{r}\varphi(rx)$ for $r\leq r_{0}$. In this case, $\|\tilde{\varphi}\|_{C^{1}}\leq \eta$ and
\[
\|\tilde{\beta}-\beta_0\|_{C^{0,\alpha}}
\le r^{\alpha}\|\beta-\beta_0\|_{C^{0,\alpha}}.
\]
Now, let us choose \(r\in (0,r_{0}]\cap(0,1)\) such that $\|\tilde{\beta}-\beta_0\|_{C^{0,\alpha}}
\le\frac{1-\rho^{\nu}}{2\mathrm{C}_{1}}\eta$. Choosing
\[
K=4(1+\|u\|_{L^{\infty}}+(\delta_{0}\eta)^{-1}(\|f\|_{L^{\infty}}^{\frac{1}{1+p}}+\|g\|_{C^{0,\alpha}})),
\]
we can ensure that $\|\tilde{v}\|_{L^{\infty}}\leq \frac{1}{4}$, $\|\tilde{f}\|_{L^{\infty}}\leq \eta$ and $\|\tilde{g}\|_{C^{0,\alpha}}\leq \frac{\delta_{0}\eta}{4}$. Now, we can define $v(x)= \tilde{v}(x)-\frac{\tilde{g}(0)}{\tilde{\beta}_n(0)} x_n -\tilde{v}(0)$
and $\tilde{\xi} =-\frac{\tilde{g}(0)}{\tilde{\beta}_n(0)} e_n$. Thus, $v$ solves in viscosity sense
\begin{equation*} 
\left\{
\begin{array}{rclcl}
\tilde{\mathcal{H}}(x,Dv-\vec{\xi})\tilde{F}(D^2v) &=& \tilde{f}(x) & \mbox{in} & \tilde{\Omega}_{1}:=\left(\frac{1}{r}\Omega\right)\cap \mathrm{B}_{1}, \\
\tilde{\beta} \cdot Dv &=& \bar{g}(x) & \mbox{on} & \partial \tilde{\Omega}_{1}:= \partial\left(\frac{1}{r}\Omega\right)\cap \mathrm{B}_{1},
\end{array}
\right.
\end{equation*}
where $\bar{g}(x)=\tilde{g}(x)-\frac{\tilde{g}(0)}{\tilde{\beta}_n(0)}\tilde{\beta}_{n}(x)$ which satisfies $v(0)=0=\bar{g}(0)$, $\|v\|_{L^{\infty}}\leq 2\|\tilde{v}\|_{L^{\infty}}+\frac{|\tilde{g}(0)|}{\delta_{0}}\leq 1$ and $\|\bar{g}\|_{C^{0,\alpha}}\leq \|\tilde{g}\|_{C^{0,\alpha}}+\frac{|\tilde{g}(0)|}{\delta_{0}}\|\tilde{\beta}_{n}\|_{C^{0,\alpha}}\leq \frac{\eta}{2}$,which justifies the statement.

Thus, we may assume that $u$, $f$, $\beta$, $g$, and $\varphi$ satisfy the conditions above. Under this assumption, we claim that $u$ is of class $C^{1,\nu}$ at the origin. 
To this end, we assert that there exists a sequence of affine functions 
$\ell_{j}(x)=\vec{a}_{j}\cdot x$ ($j\geq 0$) such that
\begin{itemize}
\item[-] $|\vec{a}_{j+1}-\vec{a}_{j}|\leq \mathrm{C}_{1}\rho^{j\nu}$; 
\item[-] $\beta_{0}\cdot \vec{a}_{j}=0$;
\item[-] $\|u-\ell_{j}\|_{L^{\infty}(\Omega_{\rho^{j}})}\leq \rho^{j(1+\nu)}$, 
\end{itemize} 
The proof proceeds by induction on $j$. The base case $j=0$ follows from the initial reduction by taking $\ell_{0}=0$. 
Assume now, as the induction hypothesis, that the statement holds for some $j\geq 0$. 
Define
\[
u_{j}(x)=\frac{(u-\ell_{j})(\rho^{j}x)}{\rho^{j(1+\nu)}}.
\]
By the induction hypothesis, $u_{j}$ is a normalized viscosity solution of
\[
\left\{
\begin{array}{rclcl}
\mathcal{H}_{j}(x,Du_{j}-\vec{\xi}_{j}) F_{j}(D^2 u_{j}) &=& f_{j}(x) & \mbox{in} & \left(\rho^{-j}\Omega\right)_{1}, \\
\beta_{j} \cdot D u_{j} &=& g_{j}(x) & \mbox{on} & \partial\left(\rho^{-j}\Omega\right)_{1},
\end{array}
\right.
\]
where
\begin{eqnarray*}
\left\{
\begin{array}{lll}
F_{j}(\mathrm{M}) &:=& \rho^{j(1-\nu)} F\!\left(\rho^{j(\nu-1)} \mathrm{M}\right), \\
f_{j}(x) &:=& \rho^{j(1-\nu(1+p))} f(\rho^{j}x), \\
\vec{\xi}_{j} &:=& \rho^{-j\nu}(\vec{\xi}-\vec{a}_{j}),\\
\mathcal{H}_{j}(x,\vec{\zeta}) &:=& |\vec{\zeta}|^{p}+\mathfrak{a}_{j}(x)|\vec{\zeta}|^{q},\\
\mathfrak{a}_{j}(x) &:=& \rho^{j\nu(q-p)} \mathfrak{a}(\rho^{j}x), \\
\beta_{j}(x) &:=& \beta(\rho^{j}x), \\
g_{j}(x) &:=& \rho^{-j\nu}\big(g(\rho^{j}x) - \beta(\rho^{j}x) \cdot \vec{a}_{j}\big).
\end{array}
\right.
\end{eqnarray*}
Indeed, since $Du(\rho^j x)=\rho^{j\nu}Du_j(x)+\vec{a}_j$, we have
$Du(\rho^j x)-\vec{\xi} = \rho^{j\nu}(Du_j(x)-\vec{\xi}_j)$.
A direct computation shows that $F_{j}\in \mathcal{E}(\lambda,\Lambda)$ and that $u_{j}(0)=g_{j}(0)=0$ (since $\beta_{0}\cdot \vec{a}_{j}=0=g(0)$). Furthermore, by the choice of $\nu$, we obtain
\[
\|f_{j}\|_{L^{\infty}((\rho^{-j}\Omega)_{1})}
= \rho^{j(1-\nu(1+p))}\|f\|_{L^{\infty}(\Omega_{\rho^{j}})}
\leq \|f\|_{L^{\infty}(\Omega_{1})}
\leq \eta, 
\]
since $0<\rho\leq\frac{1}{2}$. Moreover, as $g_{j}(0)=0$,  $\beta_{0}\cdot \vec{a}_{j}=0$, and $\nu\leq \alpha$, it follows that
\begin{eqnarray}
\|g_{j}\|_{L^{\infty}(\partial(\rho^{-j}\Omega)_{1})}
&\leq& [g_{j}]_{C^{0,\alpha}(\partial(\rho^{-j}\Omega)_{1})} \nonumber\\
&\leq& \rho^{j(\alpha-\nu)}
\left([g]_{C^{0,\alpha}(\partial\Omega_{\rho^{j}})}
+|\vec{a}_{j}|[\beta-\beta_{0}]_{C^{0,\alpha}(\partial\Omega_{\rho^{j}})}\right)\nonumber\\
&\leq& \|g\|_{C^{0,\alpha}(\partial \Omega_{1})}
+\frac{\mathrm{C}_{1}}{1-\rho^{\nu}}\|\beta-\beta_{0}\|_{C^{0,\alpha}(\partial\Omega_{1})}
\leq \eta,\label{4.6}
\end{eqnarray}
where, in the last inequality, we have used the reductions and the induction hypothesis, which ensures
\[
|\vec{a}_{j}|\leq \sum_{i=0}^{j-1}|\vec{a}_{i+1}-\vec{a}_{i}|
\leq \mathrm{C}_{1}\sum_{i=0}^{j-1}\rho^{i\nu}
\leq \frac{\mathrm{C}_{1}}{1-\rho^{\nu}}.
\]
Similarly to \eqref{4.6}, we also obtain
\[
\|\beta_{j}-(\beta_{j})_{0}\|_{C^{0,\alpha}(\partial (\rho^{-j}\Omega)_{1})}
\leq \eta.
\]
Finally, defining $\varphi_{j}(x)=\frac{\varphi(\rho^{j}x)}{\rho^{j}}=\varphi_{\rho^{-j}\Omega}(x)$, we have that $\varphi_{j}(0)=|D\varphi_{j}(0)|=0$ and
\[
\|\varphi_{j}\|_{C^{1}(\mathrm{T}_{1})}\leq \|\varphi\|_{C^{1}(\mathrm{T}_{\rho^{j}})}\leq \eta.
\]
Therefore, we may apply Lemma~\ref{Lemma3.2} and conclude the existence of an affine function $\ell_{\star}(x)=\vec{a}_{\star}\cdot x+b_{\star}$ such that:
\begin{itemize}
\item[-] $|\vec{a}_{\star}|+|b_{\star}|\leq \mathrm{C}_{1}$; 
\item[-] $(\beta_{j})_{0}\cdot \vec{a}_{\star}=0$;
\item[-] $\|u_{j}-\ell_{\star}\|_{L^{\infty}((\rho^{-j}\Omega)_{\rho})}\leq \rho^{1+\nu}$,
\end{itemize}
with $b_{\star}=u_{j}(0)=0$. Accordingly, defining $\vec{a}_{j+1}=\vec{a}_{j}+\rho^{j\nu}\vec{a}_{\star}$ and rescaling the above estimate, we conclude that the affine function $\ell_{j+1}(x)=\vec{a}_{j+1}\cdot x$ satisfies
\[
\|u-\ell_{j+1}\|_{L^{\infty}(\Omega_{\rho^{j+1}})}
\leq \rho^{(j+1)(1+\nu)}.
\]

Moreover, by the induction hypothesis and the condition $(\beta_{j})_{0}\cdot \vec{a}_{\star}=0$, it follows that
\[
\beta_{0}\cdot \vec{a}_{j+1}
=
\beta_{0}\cdot \vec{a}_{j}
+
\rho^{j\nu}\beta_{0}\cdot \vec{a}_{\star}
=
0,
\]
since $\beta_{0}=(\beta_{j})_{0}$. Finally, the definition of $\vec{a}_{j+1}$ implies that
\[
|\vec{a}_{j+1}-\vec{a}_{j}|=\rho^{j\nu}|\vec{a}_{\star}|\leq \mathrm{C}_{1}\rho^{j\nu}.
\]
This proves the claim for $j+1$.

Now, by the properties of the sequence of affine functions, it follows that $\ell_{j}\to \ell_{\infty}$ for an affine function $\ell_{\infty}(x)=\vec{a}_{\infty}\cdot x$, with the following rate of convergence for the coefficients $\vec{a}_{\infty}$:
\begin{equation}\label{4.7}
|\vec{a}_{j}-\vec{a}_{\infty}|\leq \frac{\mathrm{C}_{1}}{1-\rho^{\nu}}\rho^{j\nu}.
\end{equation}
Now, given $r\in (0,\rho)$, consider $j\in\mathbb{N}$ such that $\rho^{j+1}< r\leq\rho^{j}$. In this case, 
\begin{eqnarray*}
\|u-\ell_{\infty}\|_{L^{\infty}(\Omega_{r})}
&\leq&\|u-\ell_{j}\|_{L^{\infty}(\Omega_{\rho^{j}})}
+\|\ell_{j}-\ell_{\infty}\|_{L^{\infty}(\Omega_{\rho^{j}})} \\
&\stackrel{\eqref{4.7}}{\leq}& \rho^{j(1+\nu)}
+\rho^{j}\frac{\mathrm{C}_{1}}{1-\rho^{\nu}}\rho^{j\nu}\\
&\leq&\mathrm{C}_{2}r^{1+\nu},
\end{eqnarray*}
where $\mathrm{C}_{2}=\frac{1-\rho^{\nu}+\mathrm{C}_{1}}{\rho^{1+\nu}(1-\rho^{\nu})}$. Hence, $u$ is of class $C^{1,\nu}$ at the origin. The desired regularity then follows from a standard covering argument.

For the general case, we will apply a bootstrap argument. Let \( u \) be a viscosity solution to \eqref{1.2}. Then \( u \) also solves the following problem:
\begin{eqnarray*}
\left\{
\begin{array}{rclcl}
\mathcal{H}(x,Du)F(D^2u) &=& f(x) & \mbox{in} & \Omega_1, \\
\beta \cdot D u &=& \tilde{g}(x) & \mbox{on} & \partial \Omega_1,
\end{array}
\right.
\end{eqnarray*}
where \( \tilde{g}(x) = g(x) - \gamma(x)u(x) \). By the structural assumptions $\tilde{g}$ is a bounded function and in this case we can use the Boundary Hölder regularity stated in Theorem~\ref{Holder} and conclude that \( u \) is \( C^{0,\theta}(\overline{\Omega_{\frac{4}{5}}})\) for some $\theta=\theta(n,\lambda,\Lambda, \delta_{0})\in (0,1)$. In particular, $\tilde{g}\in C^{0,\min\{\theta,\alpha\}}(\partial\Omega_{\frac{4}{5}})$. Thus, we can apply the established result for $\gamma=0$  to conclude the \( C^{1,\bar{\nu}}(\overline{\Omega_{\frac{3}{4}}}) \) regularity for 
$$
\bar{\nu}=\min\left\{\alpha^{-}_{0},\min\{\theta,\alpha\},\frac{1}{1+p}\right\}
$$
in particular, $u\in C^{0,1}(\overline{\Omega_{\frac{3}{4}}})$. This fact implies that $\tilde{g}$ belongs to $C^{0,\alpha}(\overline{\Omega_{\frac{3}{4}}})$ and apply again the result when $\gamma=0$ for conclude the desired regularity  for the problem \eqref{1.2}.
 This completes the proof.
\end{proof}


\section{Improved regularity at vanishing source points: Proof of Theorem \ref{Thm2}}\label{Sec4}

With the main result (Theorem~\ref{Thm1}) at hand, we are now ready to prove Theorem~\ref{Thm2}.

\begin{proof}[\bf Proof of Theorem \ref{Thm2}]
The proof follows closely the argument of Theorem~\ref{Thm1}. To avoid repetition, we emphasize only the main differences.

We may perform the same normalization and assume that $u(0)=g(0)=0$ and that $u$ is a normalized viscosity solution of
\begin{equation*} 
\left\{
\begin{array}{rclcl}
\mathcal{H}(x,Du-\vec{\xi})F(D^2u) &=& f(x) & \mbox{in} & \Omega_{1}, \\
\beta \cdot D u &=& g(x) & \mbox{on} & \partial \Omega_{1},
\end{array}
\right.
\end{equation*}
for some $\vec{\xi}\in\mathbb{R}^{n}$ with
\[
\mathrm{C}_{\alpha^{\prime}}\le \eta,
\qquad
\|g\|_{C^\alpha(\partial\Omega_1)}\le \frac{\eta}{2},
\]
\[
\|\beta-\beta_0\|_{C^\alpha(\partial\Omega_1)}
\le \frac{1-\rho^{\nu^{\prime}}}{2\mathrm{C}_{1}}\,\eta,
\qquad
\varphi(0)=0,\quad D\varphi(0)=0,\quad
\|\varphi\|_{C^1(\mathrm{T}_1)}\le \eta,
\]
where $0<\rho\leq \frac{1}{2}$, $\eta>0$, and $\mathrm{C}_{1}>0$ are the constants provided by Lemma~\ref{Lemma3.2}.

We then claim that there exists a sequence of affine functions 
$\ell_{j}(x)=\vec{a}_{j}\cdot x$ ($j\geq 0$) such that
\begin{itemize}
\item[-] $|\vec{a}_{j+1}-\vec{a}_{j}|\leq \mathrm{C}_{1}\rho^{j\nu^{\prime}}$; 
\item[-] $\beta_{0}\cdot \vec{a}_{j}=0$;
\item[-] $\|u-\ell_{j}\|_{L^{\infty}(\Omega_{\rho^{j}})}\leq \rho^{j(1+\nu^{\prime})}$, 
\end{itemize}
where $0<\rho\leq \frac{1}{2}$ and $\mathrm{C}_{1}>0$ are the constants from Lemma \ref{Lemma3.2}. The proof also proceeds by induction, with the main difference occurring in the inductive step. More precisely, assuming the result holds for some $j\in\mathbb{N}$, the rescaled profile
\[
u_{j}(x)=\frac{(u-\ell_{j})(\rho^{j}x)}{\rho^{j(1+\nu^{\prime})}}
\]
solves
\[
\left\{
\begin{array}{rclcl}
\mathcal{H}_{j}(x,Du_{j}-\vec{\xi}_{j}) F_{j}(D^2 u_{j}) &=& f_{j}(x) & \mbox{in} & \left(\rho^{-j}\Omega\right)_{1}, \\
\beta_{j} \cdot D u_{j} &=& g_{j}(x) & \mbox{on} & \partial\left(\rho^{-j}\Omega\right)_{1},
\end{array}
\right.
\]
where
\begin{eqnarray*}
\left\{
\begin{array}{lll}
F_{j}(\mathrm{M}) &:=& \rho^{j(1-\nu^{\prime})} F\!\left(\rho^{j(\nu^{\prime}-1)} \mathrm{M}\right), \\
f_{j}(x) &:=& \rho^{j(1-\nu^{\prime}(1+p))} f(\rho^{j}x), \\
\vec{\xi}_{j}&:=& \rho^{-j\nu^{\prime}}(\vec{\xi}-\vec{a}_{j}),\\
\mathcal{H}_{j}(x,\vec{\zeta})&:=&|\vec{\zeta}|^{p}+\mathfrak{a}_{j}(x)|\vec{\zeta}|^{q},\\
\mathfrak{a}_{j}(x) &:=& \rho^{j\nu^{\prime}(q-p)} \mathfrak{a}(\rho^{j}x), \\
\beta_{j}(x) &:=& \beta(\rho^{j}x), \\
g_{j}(x) &:=& \rho^{-j\nu^{\prime}}\left(g(\rho^{j}x) - \beta(\rho^{j}x) \cdot \vec{a}_{j}\right).
\end{array}
\right.
\end{eqnarray*}
Since $Du(\rho^j x)-\vec{\xi} = \rho^{j\nu^{\prime}}(Du_j(x)-\vec{\xi}_j)$, the above rescaled equation follows.
Thus, the assumptions of Lemma~\ref{Lemma3.2} are verified as before, except for the smallness of the $L^{\infty}$ norm of the source term. Since $f(0)=0$ and $f$ is $\alpha^{\prime}$-Hölder at the origin, we have
\[
\|f_{j}\|_{L^{\infty}((\rho^{-j}\Omega)_{1})}
= \rho^{j(1-\nu^{\prime}(1+p))}\|f\|_{L^{\infty}(\Omega_{\rho^{j}})}
\leq \rho^{j(1+\alpha^{\prime}-\nu^{\prime}(1+p))}\mathrm{C}_{\alpha^{\prime}}
\leq \eta. 
\]
This allows us to apply Lemma~\ref{Lemma3.2} once again. Once the claim is established, the remainder of the argument follows exactly as in the previous proof. Finally, the right-hand side of the desired estimate follows from the bound
\[
\|f\|_{L^{\infty}(\overline{\Omega_{1}})}\leq \mathrm{C}_{\alpha^{\prime}}.
\]
This completes the proof.
\end{proof}
As an immediate consequence of the previous result, we derive the following corollary, which classifies the regularity of solutions to the problem \eqref{1.2} according to the regularity of the governing operator and the source term.
\begin{corollary}
Let $u$ be a viscosity solution to \eqref{1.2}, where $\Omega\subset \mathbb{R}^{n}$ is a bounded $C^{1}$ domain and $0\in \partial \Omega_{1}$. 
Assume the structural conditions {\bf (A1)–(A3)} and that $f$ satisfy \eqref{1.5} and $f(0)=0$. Then the following statements hold:
\begin{itemize}
\item[(i)] If $\alpha^{\prime}\geq p$, $\alpha \geq \alpha_0$ and $0<\alpha_{0}<1$, then $u\in C^{1,\alpha_{0}^{-}}$ at the origin, that is, $u\in C^{1,\bar{\alpha}}$ at the origin for all $0<\bar{\alpha}<\alpha_{0}$.
\item[(ii)] If $F$ is either a quasiconvex or quasiconcave operator and $\frac{1+\alpha^{\prime}}{1+p}\leq \alpha<1$, then $u\in C^{1,\frac{1+\alpha^{\prime}}{1+p}}$ at the origin.
\item[(iii)] If $\alpha^{\prime}\geq p$, $\alpha=1$, and $F$ is either a quasiconvex or quasiconcave operator, then $u\in C^{1,1^{-}}$ at the origin, precisely, $C^{1,\hat{\alpha}}$ at the origin for all $\hat{\alpha}\in (0,1)$.
\item[(iv)] If $\alpha^{\prime}< p$, $\alpha\geq\frac{1+\alpha^{\prime}}{1+p}$, and $F$ is either a quasiconvex or quasiconcave operator, then $u\in C^{1,\frac{1+\alpha^{\prime}}{1+p}}$ at the origin.
\end{itemize}
\end{corollary}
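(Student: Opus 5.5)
The corollary follows from Theorem~\ref{Thm2}: in each of the four cases we compute $\nu' = \min\{\alpha_0^{-},\alpha,\frac{1+\alpha'}{1+p}\}$ and read off the conclusion. When $F$ is quasiconvex or quasiconcave, we first replace $\alpha_0$ by $1$. This is legitimate because \cite[Proposition 5.1]{BesRicSil26} gives the $C^{1,1}$ (indeed $C^{2,\bar\alpha}$) estimate for the limiting profile \eqref{limitprofile}, so \textbf{(A3)} holds with $\alpha_0=1$, exactly as in Corollary~\ref{Corollary1.4}. No new analysis is needed. We apply the improvement-of-flatness iteration behind Theorem~\ref{Thm2}, fixing the target exponent in each case and checking that $f_j$ remains small. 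That smallness holds precisely when $1+\alpha'-\nu'(1+p)\ge 0$, i.e. $\nu'\le\frac{1+\alpha'}{1+p}$.

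\emph{Case (i).} The hypothesis $\alpha'\ge p$ gives $\frac{1+\alpha'}{1+p}\ge 1>\alpha_0$, and $\alpha\ge\alpha_0$ by assumption. Hence $\min\{\alpha,\frac{1+\alpha'}{1+p}\}\ge\alpha_0$, which is the complementary case of Theorem~\ref{Thm2}. It yields \eqref{1.6} with $\nu'=\varsigma$ for every $\varsigma<\alpha_0$, i.e. $C^{1,\alpha_0^-}$ at the origin.

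\emph{Cases (ii) and (iv).} Here $\alpha_0=1$ and $\frac{1+\alpha'}{1+p}\le\alpha$, so the minimum equals $\frac{1+\alpha'}{1+p}$. In case (ii) this value is $\le\alpha<1$. In case (iv), $\alpha'<p$ forces $\frac{1+\alpha'}{1+p}<1$. In both cases the exponent is strictly below $\alpha_0=1$, so the first alternative of Theorem~\ref{Thm2} gives $C^{1,\frac{1+\alpha'}{1+p}}$ at the origin. (The statement of Theorem~\ref{Thm2} has a typo and should read ``$<\alpha_0$''; we use the intended version.)

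\emph{Case (iii).} Here $\alpha=1$ and $\frac{1+\alpha'}{1+p}\ge1=\alpha_0$, so we are in the complementary case and obtain $C^{1,\hat\alpha}$ for every $\hat\alpha<1$.

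The main point to check is case (iii). There $\alpha=1$ lies outside the range $\alpha\in(0,\alpha_0]$ with $\alpha_0<1$ that Remark~\ref{Remark1.1} allows, so we must verify that the quasiconvex/quasiconcave $C^{1,1}$-type estimate from \cite{BesRicSil26} indeed supplies \textbf{(A3)} with $\alpha_0=1$. We also need that, in the iteration, $\|g_j\|$ and $\|\beta_j-\beta_0\|$ still decay, which requires $\rho^{j(\alpha-\nu')}\le1$, true since $\nu'<1=\alpha$. Everything else is the arithmetic of the minimum above.
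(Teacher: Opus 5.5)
Your proposal is correct and takes the same route as the paper. The paper records this corollary as an immediate consequence of Theorem~\ref{Thm2}, taking $\alpha_0=1$ for quasiconvex/quasiconcave operators via \cite[Proposition 5.1]{BesRicSil26} as in Corollary~\ref{Corollary1.4}; your case-by-case evaluation of $\min\{\alpha_0^{-},\alpha,\frac{1+\alpha'}{1+p}\}$ is right, and so is your reading of the first alternative of Theorem~\ref{Thm2} as ``$<\alpha_0$''.
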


\begin{remark}
It is plausible that consequences (ii)--(iv) of this corollary, as well as the optimality statement in Corollary \ref{Corollary1.4}, remain valid in the regime where the ellipticity aperture of the operator $F$, defined by $\mathcal{A}_{F}:=\frac{\Lambda}{\lambda}-1$, is small. Indeed, revisiting \cite{WuNiu23} (see also \cite{dSS}), one may ensure that $\alpha_{0}=1$, as in \cite{BesRicSil26}, in the quasiconvex/quasiconcave setting.
\end{remark}

\subsection*{Acknowledgments}
Junior da Silva Bessa has been supported by FAPESP-Brazil under Grant No. 2023/18447-3. 
Jehan Oh is supported by the National Research Foundation of Korea (NRF) grant funded by the Korea government [Grant Nos. RS-2025-00555316 and RS-2025-25415411].

\vspace{0.5cm}
\noindent  \textsc{Junior da Silva Bessa} \hfill  \\
\hfill Universidade Estadual de Campinas \\
\hfill Instituto de Matem\'{a}tica, Estat\'{i}stica e Computa\c{c}\~{a}o Cient\'{i}fica - IMECC\\
\hfill Departamento de Matem\'{a}tica \\
\hfill Rua S\'{e}rgio Buarque de Holanda, 651 \\
\hfill Campinas - SP, Brazil 13083-859\\
\hfill \texttt{jbessa@unicamp.br}\\
\vspace{0.5cm}

\noindent  \textsc{Jehan Oh} \hfill  \\
\hfill  Kyungpook National University \\
\hfill Department of Mathematics \\
\hfill 41566 Daegu, Republic of Korea\\
\hfill \texttt{jehan.oh@knu.ac.kr}\\
\vspace{0.5cm}


\begin{thebibliography}{99}


\bibitem{ART} D.~J. Ara\'ujo, G.~C. Ricarte and E.~V.~O. Teixeira, Geometric gradient estimates for solutions to degenerate elliptic equations, Calc. Var. Partial Differential Equations {\bf 53} (2015), no.~3-4, 605--625.


\bibitem{AZ} D.~J.~Ara\'{u}jo and L.~Zhang, Optimal $C^{1,\alpha}$ estimates for a class of elliptic quasilinear equations, Comm. Contemp. Math. {\bf 22} (2020), no.~5, 1950062.


\bibitem{BanVer22} A. Banerjee and R.~B. Verma, $C^{1,\alpha}$ regularity for degenerate fully nonlinear elliptic equations with Neumann boundary conditions, Potential Anal. {\bf 57} (2022), no.~3, 327--365.


\bibitem{Bar} G.~Barles, Fully nonlinear Neumann type boundary conditions for second-order elliptic and parabolic equations,
J. Differential Equations {\bf 106} (1993), no.~1, 90--106.


\bibitem{BCM} P. Baroni, M. Colombo and G. Mingione, Harnack inequalities for double phase functionals, Nonlinear Anal. {\bf 121} (2015), 206--222.

\bibitem{BDaSO26} J. da~S.~Bessa, J.~V. da~Silva and L. Ospina, Schauder estimates for flat solutions to a class of fully nonlinear elliptic PDEs with Dini continuous data: a geometric tangential approach, Bull. Braz. Math. Soc. (N.S.) {\bf 57} (2026), no.~2, Paper No. 14, 28 pp.

\bibitem{BesdaSRic26} J. da~S.~Bessa, J.~V. da~Silva and G.~C. Ricarte, Sharp moduli of continuity for solutions to fully nonlinear elliptic equations with oblique boundary conditions, J. Differential Equations {\bf 455} (2026), Paper No. 113961, 42 pp.

\bibitem{BesRicSil26} J. da~S.~Bessa, G.~C. Ricarte and P.~H.~C. Silva, Optimal gradient regularity to degenerate fully nonlinear elliptic models with oblique boundary condition, Nonlinear Anal. {\bf 262} (2026), Paper No. 113919, 16 pp.


\bibitem{BD1} I.~Birindelli and F.~Demengel, Comparison principle and Liouville type results for singular fully nonlinear operators,
Ann. Fac. Sci. Toulouse Math. (6) {\bf 13} (2004), no.~2, 261--287.


\bibitem{BD2} I.~Birindelli and F.~Demengel, Eigenvalue, maximum principle and regularity for fully non linear homogeneous operators,
Comm. Pure Appl. Anal. {\bf 6} (2007), no.~2, 335--366.


\bibitem{BD3} I.~Birindelli, F.~Demengel and F.~Leoni, $C^{1,\gamma}$ regularity for singular or degenerate fully nonlinear equations and applications,
NoDEA Nonlinear Differential Equations Appl. {\bf 26} (2019), no.~5, Art.~40.

\bibitem{ByunKimOh25} S.-S. Byun, H. Kim and J. Oh, $C^{1,\alpha }$ regularity for degenerate fully nonlinear elliptic equations with oblique boundary conditions on $C^1$ domains, Calc. Var. Partial Differential Equations {\bf 64} (2025), no.~5, Paper No. 174, 20 pp.
\bibitem{CCKS} L. A. Caffarelli, M. G. Crandall, M. Kocan, and A. \'{S}wi\c{e}ch,
On viscosity solutions of fully nonlinear equations with measurable ingredients, Comm. Pure Appl. Math. {\bf 49} (1996), no.~4, 365--397.

\bibitem{CM1} M.~Colombo and G.~Mingione, Regularity for double phase variational problems,
Arch. Ration. Mech. Anal. {\bf 215} (2015), no.~2, 443--496.


\bibitem{CM2} M.~Colombo and G.~Mingione, Bounded minimisers of double phase variational integrals,
Arch. Ration. Mech. Anal. {\bf 218} (2015), no.~1, 219--273.


\bibitem{CM3} M.~Colombo and G.~Mingione, Calder\'{o}n--Zygmund estimates and non-uniformly elliptic operators,
J. Funct. Anal. {\bf 270} (2016), no.~4, 1416--1478.



\bibitem{dSS} J.~V. da~Silva and M.~S. Santos, Schauder and Calder\'on-Zygmund type estimates for fully nonlinear parabolic equations under ``small ellipticity aperture'' and applications, Nonlinear Anal. {\bf 246} (2024), Paper No. 113578, 17 pp.

\bibitem{dSR} J.~V. da~Silva and G.~C. Ricarte, Geometric gradient estimates for fully nonlinear models with non-homogeneous degeneracy and applications, Calc. Var. Partial Differential Equations {\bf 59} (2020), no.~5, Paper No. 161, 33 pp.



\bibitem{DeFil20} C. De~Filippis, Gradient bounds for solutions to irregular parabolic equations with $(p, q)$-growth, Calc. Var. Partial Differential Equations {\bf 59} (2020), no.~5, Paper No. 171, 32 pp.

\bibitem{DeFil21} C. De~Filippis, Regularity for solutions of fully nonlinear elliptic equations with nonhomogeneous degeneracy, Proc. Roy. Soc. Edinburgh Sect. A {\bf 151} (2021), no.~1, 110--132.

\bibitem{DFM1} C.~De~Filippis and G.~Mingione, Nonuniformly elliptic Schauder theory,
Invent. Math. {\bf 234} (2023), no.~3, 1109--1196.
 
\bibitem{DFM2} C.~De~Filippis and G.~Mingione, The sharp growth rate in nonuniformly elliptic Schauder theory,
Duke Math. J. {\bf 174} (2025), no.~9, 1775--1848.

\bibitem{DosPraT16} D.~S. dos~Prazeres and E.~V.~O. Teixeira, Asymptotics and regularity of flat solutions to fully nonlinear elliptic problems, Ann. Sc. Norm. Super. Pisa Cl. Sci. (5) {\bf 15} (2016), 485--500.

\bibitem{Ev82} L.~C. Evans, Classical solutions of fully nonlinear, convex, second-order elliptic equations, Comm. Pure Appl. Math. {\bf 35} (1982), no.~3, 333--363.

\bibitem{FRZ} Y.~Fang, V.~D.~R\u{a}dulescu and C.~Zhang, Regularity of solutions to degenerate fully nonlinear elliptic equations with variable exponent,
Bull. Lond. Math. Soc. {\bf 53} (2021), no.~6, 1863--1878.

\bibitem{Goffi24} A. Goffi, High-order estimates for fully nonlinear equations under weak concavity assumptions, J. Math. Pures Appl. (9) {\bf 182} (2024), 223--252.


\bibitem{IS} C. Imbert and L.~E. Silvestre, $C^{1,\alpha}$ regularity of solutions of some degenerate fully non-linear elliptic equations, Adv. Math. {\bf 233} (2013), 196--206.


\bibitem{Ish} H.~Ishii, Fully nonlinear oblique derivative problems for nonlinear second-order elliptic PDE's,
Duke Math. J. {\bf 62} (1991), no.~3, 633--661.

\bibitem{Kry82} N.~V. Krylov,Boundedly nonhomogeneous elliptic and parabolic equations. Izv. Akad. Nak. SSSR Ser. Mat. {\bf 46} (1982), 487-523; English transl. in Math USSR Izv. 20 (1983), 459-492.

\bibitem{LiZhang18} D.~S. Li and K. Zhang, Regularity for fully nonlinear elliptic equations with oblique boundary conditions, Arch. Ration. Mech. Anal. {\bf 228} (2018), no.~3, 923--967.


\bibitem{LT} P.-L.~Lions and N.~S.~Trudinger, Linear oblique derivative problems for the uniformly elliptic Hamilton--Jacobi--Bellman equation,
Math. Z. {\bf 191} (1986), no.~1, 1--15.
 

\bibitem{MR} G.~Mingione and V.~D.~R\u{a}dulescu, Recent developments in problems with nonstandard growth and nonuniform ellipticity,
J. Math. Anal. Appl. {\bf 501} (2021), no.~1, Paper No.~125197.


\bibitem{MS} E.~Milakis and L.~Silvestre, Regularity for fully nonlinear elliptic equations with Neumann boundary data,
Comm. Partial Differential Equations {\bf 31} (2006), no.~7--9, 1227--1252.


\bibitem{NV07} N.~Nadirashvili and S.~Vl\u{a}du\c{t}, Nonclassical solutions of fully nonlinear elliptic equations,
Geom. Funct. Anal. {\bf 17} (2007), no.~4, 1283--1296.

\bibitem{NV08} N. Nadirashvili and S. Vl\u{a}du\c{t}, \textit{Singular viscosity solutions to fully nonlinear elliptic equations}. J. Math. Pures Appl. (9) 89 (2) (2008) 107–113.

\bibitem{Nasc25} T. M. Nascimento,
Schauder-type estimates for fully nonlinear degenerate elliptic equations, J. Funct. Anal. {\bf 289} (2025), no. 1, Paper No. 110900, 23 pp.

\bibitem{PT16} E.~A. Pimentel and E.~V.~O. Teixeira, Sharp Hessian integrability estimates for nonlinear elliptic equations: an asymptotic approach, J. Math. Pures Appl. (9) {\bf 106} (2016), no.~4, 744--767.

\bibitem{Ric20} G.~C. Ricarte, Optimal $C^{1,\alpha}$ regularity for degenerate fully nonlinear elliptic equations with Neumann boundary condition, Nonlinear Anal. {\bf 198} (2020), 111867, 13 pp.

\bibitem{ST15} L.~E. Silvestre and E.~V.~O. Teixeira, Regularity estimates for fully non linear elliptic equations which are asymptotically convex, in {\it Contributions to nonlinear elliptic equations and systems}, 425--438, Progr. Nonlinear Differential Equations Appl., 86, Birkh\"auser/Springer, Cham, 2015.

\bibitem{T1} E.~V.~Teixeira, Universal moduli of continuity for solutions to fully nonlinear elliptic equations,
Arch. Ration. Mech. Anal. {\bf 211} (2014), no.~3, 911--927.


\bibitem{T2} E.~V.~Teixeira, Regularity for quasilinear equations on degenerate singular sets,
Math. Ann. {\bf 358} (2014), no.~1--2, 241--256.

\bibitem{WuNiu23} D. Wu and P. Niu, Interior pointwise $C^{2,\alpha}$ regularity for fully nonlinear elliptic equations, Nonlinear Anal. {\bf 227} (2023), Paper No. 113159, 9 pp.

\bibitem{Zh1} V.~V.~Zhikov, Averaging of functionals of the calculus of variations and elasticity theory,
Izv. Akad. Nauk SSSR Ser. Mat. {\bf 50} (1986), no.~4, 675--710;
English transl., Math. USSR-Izv. {\bf 29} (1987), no.~1, 33--66.


\bibitem{Zh2} V.~V.~Zhikov, On Lavrentiev's phenomenon,
Russian J. Math. Phys. {\bf 3} (1995), no.~2, 249--269.

\end{thebibliography}
\end{document}